\documentclass[a4paper, 12pt]{article}

\usepackage{amsmath}
\usepackage{amsthm}
\usepackage{amsfonts}
\usepackage{amssymb}
\usepackage{enumerate}
\usepackage{mathrsfs}
\usepackage{mathtools}
\usepackage{tikz}
\usepackage{float} %es para poner las figuras justo donde queres
\usepackage{xcolor}
\definecolor{gris}{rgb}{.5,.5,.5}

\usetikzlibrary{arrows}
\usetikzlibrary{decorations.pathreplacing}
%\usetikzlibrary{petri}
%Para facilitar las referencias
\usepackage[pagebackref=true, bookmarksopen=true,colorlinks=true, linkcolor=red,citecolor=blue]{hyperref}

%para dejar el algoritmo bonito
\usepackage[ruled]{algorithm2e}

\newtheorem{construction}{Construction}[section]
\newtheorem{theorem}[construction]{Theorem}

\newtheorem{conjecture} {Conjecture}
\newtheorem{corollary} [construction]{Corollary}

\newtheorem{definition} [construction]{Definition}

\newtheorem{lemma} [construction]{Lemma}

\newcommand{\rank}{\operatorname{rank}}
\newcommand{\mnull}{\operatorname{null}}

%para que el signo final de demostracion sea un cuadrado lleno.

%%%%%%%%%%%%%%%%%%%%%%%%%%%%%%%%%%%%%%%%%%%%%%%%%%%%%%%%%%%%%%%%%%%%%%%%%%%%%%%%%
%%%%%%%%%%%%%%%%%%%%%%%%%%%%%%%%%%%%%%%%%%%%%%%%%%%%%%%%%%%%%%%%%%%%%%%%%%%%%%%%%
%%%%%%%%%%%%%%%%%%%%%%%%%%%%%%%%%%%%%%%%%%%%%%%%%%%%%%%%%%%%%%%%%%%%%%%%%%%%%%%%%

\begin{document}
%Sugiero el siguiente titulo. Lo que hacemos tiene dos caras , la primera en un problema de alcanzabilidad: dado un bosque  puedo llegar a otro a traves de 2-switches?. El segundo es un problema de estabilidad: dado un grafo, si le aplico un 2-switch que tanto puede cambiar respecto de un parametro. Basado en esto, creo que un titulo mas informativo es:

\title{2-switch: transition and stability on graphs and forests}

%\title{Transformation of trees and forests of the same  degree sequence}

\author{Daniel A. Jaume\footnote{djaume@unsl.edu.ar}\phantom{i},
Adri\'{a}n Pastine\footnote{adrian.pastine.tag@gmail.com}\phantom{i},
and Victor Nicolas Schvöllner\footnote{victor.schvollner.tag@gmail.com}\\
Instituto de Matem\'atica Aplicada San Luis (UNSL-CONICET),\\ 
Universidad Nacional de San Luis\\ 
Av. Italia 1556, C.P.: D5702 BLX\\ 
San Luis, Argentina}

\maketitle
%nosotros no estudiamos distribuciones (que es cuantos de cada... piensen en la distribución normal, o cualquier distribuciones estadistica) lo que nosotros hacemos es estudiar que la operacion 2-switch pertuba poco varios parametros, en particular en algun sentido probamos que es ''continua'' en el sentido que la perturbación es minima (1 es lo mas pequeño en el ambito discreto), y como consecuencia de ello tenemos la propiedad de intervalo.

\begin{abstract} 
	Given any two forests with the same degree sequence, we show in an algorithmic way that one can be transformed into the other by a sequence of 2-switches in such a way that all the intermediate graphs of the transformation are forests. We also prove that the 2-switch operation perturbs minimally some well-known integer parameters in families of graphs with the same degree sequence. Then, we apply these results to conclude that the studied parameters have the interval property on those families.
\end{abstract}

{\bf Keywords:} 2-switch,  degree sequence, forests, interval property, graph parameters.

%%%%%%%%%%%%%%%%%%%%%%%%%%%%%%%%%%%%%%%%%%%%%%%%%%%%%%%%%%%%%%%%%%%%%%%%%%%%%%%%%%%%%%%
\section{Introduction}

Every graph $G=(V,E)$ in the present article is finite, simple, undirected and labeled. We use \(|G|\) and $\|G\|$ to denote the order of \(G\) (i.e. its number of vertices) and the size of $G$ (i.e., the cardinality of $E$) respectively. Unless stated otherwise, we always assume that the set of vertices of $G$ is a subset of $[n]:=\{1,\ldots  ,n\}$, for some $n$. The set of all graphs is denoted by $\mathcal{G}$, and the set of all graphs of order \(n\) with vertex set \([n]\) is denoted by \(\mathcal{G}_{n}\). When there may be ambiguity we use  \(V(G)\) and \(E(G)\) to denote the vertex set and the edge set of $G$, otherwise we just use $V$ and $E$. Vertex adjacency is denoted by $x\sim y$, and we denote the edge $xy$ (i.e., we say that $xy\in E$). If $x$ and $y$ are two vertices of a tree $T$, the sequence of vertices  $(x\dots y)$  symbolizes the (unique) path from $x$ to $y$, or between $x$ and $y$, in $T$. If necessary, more than two vertices of the path can be shown through this notation, for example, we can write $xa\dots b\dots cdy$. The number of connected components of a graph \(G\) is denoted by  \(\kappa(G)\). The complementary graph of a graph \(G=(V,E)\), denoted by \(G^{c}\), is the graph with the same set of vertices \(V\) and \(ab \in E(G^{c})\) if and only if \(ab \notin E(G)\). The subgraph of \(G\) obtained by deleting vertex \(v\) is denoted by \(G-v\). Similarly, \(G-e\) is the subgraph of \(G\) obtained by deleting edge \(e\), \(G+e\) or \(G+ab\) is the graph obtained by adding an edge to \(G\). If $W$ is a set of vertices (edges) of a graph $G$, $G-W$ denotes the subgraph obtained by deleting the vertices (edges) in $W$.

The degree sequence of a graph $G$ with vertex set $V(G)=[n]$ is the sequence $s(G)=(d_{1}, \dots, d_{n})$, where $d_i$ is the degree of vertex $i$.
A sequence $s=(d_{1}, \dots, d_{n})$ is graphical if there is a graph such that $s$ is its degree sequence.

Let \(s=(d_{1},\dots,d_{n})\) be a graphical sequence, the set of all graphs \(G\) with  degree sequence \(s\)  is denoted by  $\mathcal{G}(s)$. Similarly, by $\mathcal{F}(s)$ we denote all the forests \(F\) with  degree sequence \(s\). 

One of the most studied problems in the literature, in regards  $\mathcal{G}(s)$ and  $\mathcal{F}(s)$ is, given a graph parameter (clique number, domination number, matching number, etc.), finding the minimum and maximum values for the parameter in the family, see \cite{BockRat,GHR1,GHR2,wang2008extremal, zhang2008laplacian,zhang2013number}.
Another interesting problem is deciding which values between the minimum and the maximum can be realized by a graph in the family, see \cite{kurnosov2020set,Rao}.

Let $G$ be a graph containing four distinct vertices $a,b,c,d$ such that $ab,cd\in E$ and $ac,bd\notin E$. The process of deleting the edges $ab$ and $cd$ from $G$ and adding $ac$ and $bd$ to $G$ is referred to as a 2-switch in $G$, this is a classical operation, see \cite{Berge, Chartrand1996}. If $G'$ is the graph obtained from $G$ by a 2-switch, it is straightforward to check that $G$ and $G'$ have the same degree sequence. In other words, this operation preserves the  degree sequence.

An important fact about degree sequences is that, given two graphs with the same degree sequence,
one can be obtained from the other by applying successive 2-switches.

\begin{theorem}\label{BergeTeo}
	If $G,H\in \mathcal{G}(s)$, there exists a 2-switch sequence transforming $G$ into $H$. 
\end{theorem}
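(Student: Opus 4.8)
The plan is to proceed by induction on the number of edges in which $G$ and $H$ differ, i.e. on $\|G - E(H)\|$ (equivalently on the size of the symmetric difference $E(G)\triangle E(H)$). If this quantity is zero, then $G=H$ and the empty sequence works. Otherwise the key is to find a single 2-switch on $G$ that strictly decreases the number of discrepancies with $H$; iterating then terminates.

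First I would pick a vertex where $G$ and $H$ disagree. Since $G$ and $H$ have the same degree sequence, for every vertex $a$ the sets $N_G(a)$ and $N_H(a)$ have the same cardinality, so if they are not equal there is a vertex $b$ with $ab\in E(G)\setminus E(H)$ and a vertex $c$ with $ac\in E(H)\setminus E(G)$. Now I look at vertex $c$: since $\deg_G(c)=\deg_H(c)$ and $ac\in E(H)\setminus E(G)$, there must be a vertex $d$ with $cd\in E(G)\setminus E(H)$ (the neighbourhoods of $c$ in $G$ and $H$ again have equal size, and $a$ is a neighbour in $H$ but not in $G$, so some $d$ is a neighbour in $G$ but not in $H$). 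The four vertices $a,b,c,d$ are distinct: $a\neq c$ and $b\neq d$ because of the edge (non-)memberships just listed, $b\neq c$ since $ab\in E(G)$ but $ac\notin E(G)$, and $a\neq d$ since $cd\in E(G)$ but $ca\notin E(G)$. We have $ab, cd\in E(G)$; to perform the 2-switch replacing $\{ab,cd\}$ by $\{ac,bd\}$ we need $ac\notin E(G)$ (true) and $bd\notin E(G)$.

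The main obstacle is exactly this last requirement: it may happen that $bd\in E(G)$. The standard fix is a case split. If $bd\notin E(G)$, perform the 2-switch deleting $ab,cd$ and adding $ac,bd$; this creates the edge $ac\in E(H)$, destroys $ab\notin E(H)$, and the status of $cd$ and $bd$ must be argued to leave the discrepancy count no larger, with $ac$ guaranteeing a strict decrease — here one should count carefully, and it is cleanest to instead measure progress by $|N_G(a)\triangle N_H(a)|$ plus a suitable potential, or simply to observe the total symmetric difference drops. If instead $bd\in E(G)$, then since $bd\notin E(H)$ is not guaranteed we consider whether $bd\in E(H)$: if $bd\notin E(H)$ as well, one checks $ac\notin E(G)$ and $bd\in E(G)$ with $ac\notin E(G)$... the slick route is: when $bd\in E(G)$, note $bd\in E(G)\setminus E(H)$ is the typical subcase, and then the 2-switch on $G$ removing $ab, $ ... — more robustly, one introduces an auxiliary 2-switch. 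A clean way to finish: if $bd\in E(G)$, since degrees match at $d$ there is $e$ with $de\in E(H)\setminus E(G)$ (or $d$ has a non-neighbour issue) allowing a preliminary 2-switch that reduces to the good case; alternatively, and most economically, perform the 2-switch $\{ab,cd\}\mapsto\{ac,bd\}$ only in the good case and, in the bad case, instead apply the 2-switch $\{ab, cd\}$ is blocked so use $\{ab,\, bd\}$? — these do not share an endpoint structure, so the honest resolution is the two-step argument: first a 2-switch making $bd$ absent (possible because there is a further mismatch to exploit at $b$ or $d$), then the main one. I expect writing this bad-case reduction cleanly, and verifying that the chosen potential function strictly decreases in every case, to be the only real work; everything else is bookkeeping.
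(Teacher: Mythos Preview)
The paper does not prove Theorem~\ref{BergeTeo}; it is quoted as a classical result and attributed to Berge, so there is no in-paper argument to compare against. I will therefore assess your plan on its own merits.

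Your induction on $|E(G)\triangle E(H)|$ and the ``good case'' ($bd\notin E(G)$) are correct: the 2-switch removing $ab,cd$ and adding $ac,bd$ is valid, the removed edges lie outside $E(H)$, and the new edge $ac$ lies in $E(H)$, so $|E(G)\cap E(H)|$ strictly increases. The distinctness check for $a,b,c,d$ is also fine.

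The genuine gap is the ``bad case'' $bd\in E(G)$, and your sketched remedies do not close it. The suggestion to use ``$\{ab,bd\}$'' is a non-starter since those edges share the vertex $b$. The idea of a preliminary 2-switch that removes $bd$ is not enough by itself: such a switch need not increase (and may decrease) $|E(G)\cap E(H)|$, so you cannot appeal to the inductive hypothesis after it, and you would have to argue that the \emph{two} switches together make net progress---which requires controlling what the preliminary switch does to the edges $ab,cd,ac$ and to $\phi$. Likewise, ``exploit a further mismatch at $b$ or $d$'' just pushes the same obstruction one step further: the new fourth vertex may again be adjacent to the wrong thing. In short, the bad case is where the actual content of the theorem lives, and the plan currently contains no argument for it.

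Two standard ways to finish: (i) the canonical-form route---show every $G\in\mathcal{G}(s)$ can be 2-switched to the graph in which the vertex of largest degree is adjacent to the next $d_1$ highest-degree vertices (one 2-switch fixes one wrong neighbour, using that a higher-degree non-neighbour must have a private neighbour over any lower-degree neighbour), then recurse on the rest; this avoids the bad case entirely. (ii) Alternatively, keep your potential $\phi=|E(G)\cap E(H)|$ but allow a 2-switch on $H$ as well: when $bd\in E(G)\cap E(H)$, the 2-switch on $H$ taking $\{ac,bd\}$ to $\{ab,cd\}$ raises $\phi$ by one, and by Lemma~\ref{switchelemprop}(4) a 2-switch on $H$ can be inverted and appended at the end of the sequence; the remaining sub-subcase $bd\in E(G)\setminus E(H)$ still needs its own argument. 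Either way, the bad case is real work, not bookkeeping.
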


Theorem \ref{BergeTeo} appears throughout the literature, although its earliest reference appears  most likely in \cite{Berge}. 
A similar result is known for bipartite graphs with a given bipartite degree sequences, where the bipartite degree
sequence of a graph with bipartition $(A,B)$ is the pair $(d_A,d_B)$ containing the degrees of the vertices in $A$ and $B$ respectively. This result follows from a proof of a version of the Havel-Hakimi Theorem (\cite{Hakimi}, \cite{Havel}) for bipartite graphs (see \cite{West}). 

Since Theorem \ref{BergeTeo} assures the existence of a 2-switch sequence transforming $G$ into $H$, 
a natural question to ask is how short can the sequence be. This is studied in \cite{BeregIto},
where they  obtain the length of the shortest possible 2-switch sequence in terms of walks alternating between edges of $G$ and edges of $H$
in the symmetric difference graph between $G$ and $H$. A correct statement 
of their result would need us to introduce some notation that would not be use in this paper,
in lieu of it we provide a weaker statement that is a trivial corollary of their result.
\begin{theorem}
Let $G,H\in \mathcal{G}(s)$, and $\psi$ be the length of a shortest 2-switch sequence transforming  $G$ into $H$. 
Then $\psi\leq |E(H)-E(G)|-1$.
\end{theorem}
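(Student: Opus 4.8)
The plan is to read the inequality off from the theorem of Bereg and Ito \cite{BeregIto}, which is precisely the sense in which it is a trivial corollary. Color each edge of the symmetric difference $E(G)\bigtriangleup E(H)$ according to whether it lies in $G$ or in $H$. Their theorem identifies $\psi$ as
\[
\psi \;=\; |E(H)-E(G)| \;-\; c,
\]
where $c$ is a nonnegative integer read off from the alternating (color-switching) structure of $E(G)\bigtriangleup E(H)$ --- roughly, the largest number of parts in a decomposition of it into edge-disjoint color-alternating closed walks.

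Granting this, two elementary remarks finish the job. First, $|E(H)-E(G)|$ is symmetric in $G$ and $H$, because $\deg_G(v)=\deg_H(v)$ for every vertex $v$ forces the edges of $G$ at $v$ that avoid $H$ to be as numerous as the edges of $H$ at $v$ that avoid $G$; summing over $v$ (each such edge counted twice) gives $|E(H)-E(G)|=|E(G)-E(H)|$, so it is immaterial which color class one counts. Second, $c\ge 1$ whenever $G\ne H$: in that case $E(G)\bigtriangleup E(H)$ has an edge and, by the same local count, carries equally many $G$-colored and $H$-colored edges at each vertex, so the usual Eulerian-type argument (start at a non-isolated vertex and always leave along an unused edge of the color opposite to the one just traversed) produces a nonempty color-alternating closed walk, which is one part of a legitimate decomposition. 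Hence $\psi = |E(H)-E(G)| - c \le |E(H)-E(G)|-1$. (When $G=H$ we have $\psi=0$; the statement is read with that convention, and its real content is the case $G\ne H$.)

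I foresee no genuine obstacle, since all the difficulty lives inside \cite{BeregIto}; the only care required is quoting their statement faithfully. A self-contained proof would instead split $E(G)\bigtriangleup E(H)$ into color-alternating closed walks $W_1,\dots,W_c$ of half-lengths $\ell_1,\dots,\ell_c$ (so $W_j$ has $\ell_j$ edges of $G$ and $\ell_j$ edges of $H$, and $\sum_j\ell_j=|E(H)-E(G)|$), and show that the $G$-edges of each $W_j$ can be converted into its $H$-edges by at most $\ell_j-1$ 2-switches that in total leave every other edge of the current graph unchanged; concatenation then yields $\psi\le\sum_j(\ell_j-1)=|E(H)-E(G)|-c\le|E(H)-E(G)|-1$. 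The delicate point --- and the reason we prefer to cite --- is that a chord of $W_j$ already present in the current graph can block the obvious switch, so the $\ell_j-1$ switches resolving $W_j$ cannot always be chosen to decrease $|E(H)-E(G)|$ at every single step; one may be forced to make an intermediate switch that momentarily deletes an edge common to $G$ and $H$. That is exactly the technical core of \cite{BeregIto}.
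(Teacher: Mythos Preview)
Your proposal is correct and takes exactly the same route as the paper: the paper does not prove this statement independently but simply presents it as a ``trivial corollary'' of the Bereg--Ito result \cite{BeregIto}, deferring the actual work to that reference. If anything, you supply more justification than the paper does, since you spell out why the alternating-walk count $c$ is at least~$1$ when $G\ne H$, whereas the paper leaves even that step implicit.
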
 

In \cite{BockRat} the authors study the matching number of trees with a given degree sequence, and of 
bipartite graphs with a given bipartite  degree sequence. The authors find minimum and maximum values 
for the matching number in these families, and then show that every value between the minimum and the maximum
is realized by a graph in the family. When this happens for a parameter, it is said
to have interval property with respect to the family of graphs being studied. To prove the interval property for bipartite degree sequences, they 
show that a 2-switch alters the matching number by at most $1$, and use the version of Theorem \ref{BergeTeo}
for bipartite degree sequences. In the case of trees, one could try to apply Theorem \ref{BergeTeo}
to go from a tree with the minimum possible matching number to a tree with the maximum possible matching number. Unfortunately
the intermediate graphs may not be trees (see Figure \ref{treeunitree}). Because of this the authors of \cite{BockRat}
 had to construct the tree realizing each
value between the minimum and the maximum.
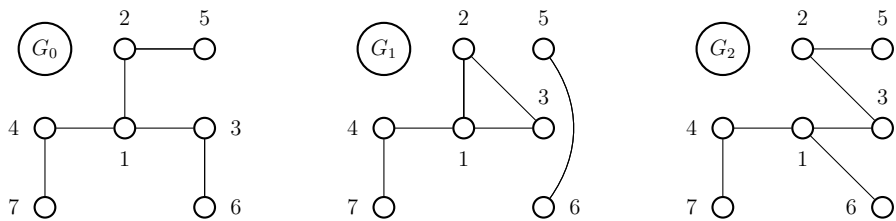
\begin{figure}[H]
\[
\begin{tikzpicture}
[scale=.7,auto=left,every node/.style={scale=.7,circle,thick,draw}] 
\node (0) at (-1.5,1.5) {$G_{0}$};
\node [label=below:1](1) at (0,0) {};
\node [label=left:4](4) at (-1.5,0) {};
\node [label=left:7](7) at (-1.5,-1.5) {};
\node [label=right:3](3) at (1.5,0) {};
\node [label=right:6](6) at (1.5,-1.5) {};
\node [label=above:2](2) at (0,1.5) {};
\node [label=above:5](5) at (1.5,1.5) {};
\draw (3) -- (1) ;
\draw (2) -- (1);
\draw (4) -- (1) ;
\draw (2) -- (5) ;
\draw (3) -- (6) ;
\foreach \from/\to in {2/5,3/6,4/7}
\draw (\from) -- (\to);
\end{tikzpicture}
\begin{tikzpicture}%separacion
\node (1) at (0,0) {};%separacion
\node (1) at (.75,0) {};%separacion
\end{tikzpicture}%separacion
%\pause	\begin{tikzpicture}
%	[scale=.7,auto=left,every node/.style={scale=.7,circle,thick,draw}] 
%	\node (0) at (-1.5,1.5) {$G_{1}$};
%	\node [label=below:1](1) at (0,0) {};
%	\node [label=left:4](4) at (-1.5,0) {};
%	\node [label=left:7](7) at (-1.5,-1.5) {};
%	\node [label=3](3) at (1.5,0) {};
%	\node [label=right:6](6) at (1.5,-1.5) {};
%	\node [label=above:2](2) at (0,1.5) {};
%	\node [label=above:5](5) at (1.5,1.5) {};
%	\draw (3) -- (1) ;
%	\draw (2) -- (1);
%	\draw (4) -- (1) ;
%	\draw (6) [bend right=35] to (5) ;
%	%\draw (1) -- (2) ;
%	\draw (5) [bend left=35] to (6) ;
%	\foreach \from/\to in {4/7,2/3,1/2}
%	\draw (\from) -- (\to);
%	\end{tikzpicture}
%	\begin{tikzpicture}%separacion
%	\node (1) at (0,0) {};%separacion
%	\node (1) at (.75,0) {};%separacion
%	\end{tikzpicture}%separacion
\begin{tikzpicture}
[scale=.7,auto=left,every node/.style={scale=.7,circle,thick,draw}] 
\node (0) at (-1.5,1.5) {$G_{1}$};
\node [label=below:1](1) at (0,0) {};
\node [label=left:4](4) at (-1.5,0) {};
\node [label=left:7](7) at (-1.5,-1.5) {};
\node [label=3](3) at (1.5,0) {};
\node [label=right:6](6) at (1.5,-1.5) {};
\node [label=above:2](2) at (0,1.5) {};
\node [label=above:5](5) at (1.5,1.5) {};
\draw (3) -- (1) ;
\draw (2) -- (1);
\draw (4) -- (1) ;
\draw (3) -- (2);
\draw (6) [bend right=35] to (5) ;
\draw (1) -- (2) ;
\draw (5) [bend left=35] to (6) ;
\foreach \from/\to in {4/7,1/2}
\draw (\from) -- (\to);
\end{tikzpicture}
\begin{tikzpicture}%separacion
\node (1) at (0,0) {};%separacion
\node (1) at (.75,0) {};%separacion
\end{tikzpicture}%separacion
\begin{tikzpicture}
[scale=.7,auto=left,every node/.style={scale=.7,circle,thick,draw}] 
\node (0) at (-1.5,1.5) {$G_{2}$};
\node [label=below:1](1) at (0,0) {};
\node [label=left:4](4) at (-1.5,0) {};
\node [label=left:7](7) at (-1.5,-1.5) {};
\node [label=3](3) at (1.5,0) {};
\node [label=left:6](6) at (1.5,-1.5) {};
\node [label=above:2](2) at (0,1.5) {};
\node [label=above:5](5) at (1.5,1.5) {};
\draw (3) -- (1) ;
%\draw (2) -- (1);
\draw (4) -- (1) ;
\draw (2) -- (5);
\draw (1) -- (6) ;
%\draw (5) [bend left=35] to (6) ;
\foreach \from/\to in {4/7,2/3}
\draw (\from) -- (\to);
\end{tikzpicture}
\]
\caption{2-switch is not closed over trees.}
\label{treeunitree}
\end{figure}  
%%%%%%%%%%%%%%%%%%%%%%%%%%%%%%%%%%%%%%%%%%%%%%%%%%%%%%%%%%%%%%%%%%%%%%%%%%%%

A natural question now arises: given two trees with the same  degree sequence, is there a way to obtain one from the other by applying successive 2-switches, in such a way that every intermediate graph is also a tree? 
Can this be done for any  other family of graphs?
In this paper we answer the first question in the positive for forests.
We also show that if two forests have the
same degree sequence, then they have the same number of connected components. This
implies also a positive answer to the first question for trees.
Concerning the second question, we show that there is no version of Theorem \ref{BergeTeo}
for bipartite graphs with a given degree sequence (not to be confused with bipartite degree sequence).
Afterwards, we apply the result to show that a plethora of parameters have the interval property with
respect to both $\mathcal{G}(s)$ and $\mathcal{F}(s)$.

The rest of the paper is presented as follows. In Section \ref{section2sasfunction} we introduce notation
to look at the 2-switch  operation as a function, and obtain some preliminary results. In Section \ref{sectiontswitch}
we characterize which 2-switches preserve the tree and the forest structure and we prove that if two forests have the
same degree sequence, then they have the same number of connected components. In Section \ref{sectionFTT}
we prove the analogous to Theorem \ref{BergeTeo} for forests with a given  degree sequence,
and present an Algorithm to find the necessary 2-switches that transform one forest into the other. In Section \ref{Stability_intevalProperty} we properly define the interval property, and find necessary conditions for a parameter
to satisfy the interval property in both $\mathcal{G}(s)$ and $\mathcal{F}(s)$. In Subsections \ref{sectionmatchingnumber},
\ref{sectionindependencenumber}, \ref{sectiondominationnumber}, \ref{sectionnumberofcomponents}, \ref{sectionpathcover},
\ref{sectionchromatic}, and \ref{sectionclique}, we show that the matching number, the independence number, the domination number, the number of components, the path-covering number, the chromatic number, and the clique number
have the interval property. Finally, in Section \ref{sectionconclusions} we give some final remarks
and present two non-isomorphic bipartite graphs with the same  degree sequence that require going through a nonbipartite graph
in order to transform one into the other through a sequence of 2-switches.

\section{2-Switch as a function}\label{section2sasfunction}

In order to define the 2-switch as a function, we need to introduce first the concept of interchangeability.

%la gente grafos gusta de frase antes que de formulas.

%stable lo usamos solo una vez y luego decimos siempre trivial
\begin{definition} \label{definterc}
    Let \(a,b,c,d \in [n]\) and let $G$ be a graph. The matrix ${{a \ b}\choose{c \ d}}$ is said to be \textbf{interchangeable} in $G$, if it satisfies the following conditions: 
	\begin{enumerate}
		\item $ab,cd\in E(G)$;
		\item $\{a,b\}\cap \{c,d\}=\varnothing$;
		\item $ac,bd\not\in E(G)$.
	\end{enumerate}
	Otherwise, ${{a \ b}\choose{c \ d}}$ is said to be \textbf{trivial} for $G$.	
\end{definition}
Notice in particular that if at least one of $a,b,c,d$ is not a vertex of $G$, then ${{a \ b}\choose{c \ d}}$ is trivial for $G$.

\begin{definition}
	\label{def2switch}
    Let \(n\) be a  integer and \(a,b,c,d \in [n]\), $A={{a \ b}\choose{c \ d}}$ and $G$ a graph. A \textbf{2-switch} is a function $\tau_{A}: \mathcal{G}\rightarrow \mathcal{G}$ defined as follows:
	\begin{equation}
    \tau_{A}(G)= \left\{ 
    \begin{array}{ll}
    G-ab-cd+ac+bd, &   \text{ if } A \text{ is interchangeable in } G, \\
    \\ G, & \text{ if } A \text{ is trivial for } G. \\
    \end{array}
    \right.
    \end{equation}

	If $\tau_{A}(G)=G$, we say that $\tau_{A}$ is \textbf{trivial} for $G$. The matrix $A$ is said to be an \textbf{action matrix} of $\tau_{A}$. 
\end{definition}

Clearly every 2-switch has associated an action matrix. Depending on the context, we can identify a 2-switch with an action matrix and omit it from the sub-index, i.e.: $\tau_{A}=\tau={{a \ b}\choose{c \ d}}$, and $\tau_{A}(G)=\tau(G)={{a \ b}\choose{c \ d}}G$. Notice that in general ${{a \ b}\choose{c \ d}} \neq {{a \ b}\choose{d \ c}}$, as a 2-switch, because the edges being added are different. It may even be the case that ${{a \ b}\choose{c \ d}}$ is interchangeable in $G$ while  ${{a \ b}\choose{d \ c}}$ it trivial for $G$. Moreover, in any action matrix of a 2-switch $\tau$, rows corresponds to those edges in $G$ that $\tau$ deletes, and columns corresponds to the edges of $G^{c}$ that $\tau$ adds to \(G\). Clearly, given any two disjoint edges $ab,cd\in G$, there are at most two different 2-switches in $G$ that act non-trivially on them. As expected, every 2-switch function preserves the degree sequence (i.e., $G\in \mathcal{G}(s) $ implies $\tau(G)\in \mathcal{G}(s)$).

From Definitions \ref{definterc} and \ref{def2switch} we can easily deduce the following.

\begin{lemma}
	\label{switchelemprop}
    Let \(n\) be a  integer and \(a,b,c,d \in [n]\). Let $A:={{a \ b}\choose{c \ d}}$ be a $2\times 2$ matrix and \(G \in \mathcal{G}_{n}\). Then:
	\begin{enumerate}
		\item if $P$ and $Q$ are $2\times 2$ permutation matrices, then $\tau_{A}=\tau_{PAQ}$;
		\item if $\tau$ is a nontrivial 2-switch in $G$, then $\left\|\tau(G)-G\right\|=\left\|G-\tau(G)\right\|=2$;
		\item if $ab$ and $cd$ are in distinct components of $G$, then ${{a \ b}\choose{c \ d}}$ and ${{a \ b}\choose{d \ c}}$ are interchangeable in $G$;
		\item if $A$ is interchangeable in $G$ and $A^t$ is the transpose of $A$, then $\tau_{A^{t}}$ is the unique 2-switch such that $\tau_{A^{t}}\tau_{A}(G)=G$ (and hence $\tau_{A^{t}}$ is the inverse 2-switch of $\tau_{A}$). 
	\end{enumerate}
\end{lemma}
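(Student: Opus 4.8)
The plan is to verify the four items in turn, each being a direct unwinding of Definitions~\ref{definterc} and~\ref{def2switch}; the only point that needs real care is the uniqueness assertion in item~4, so I will spend most effort there.

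For item~1, I would first note that a $2\times 2$ permutation matrix is either the identity or the transposition $S$, so $PAQ$ is obtained from $A$ by possibly swapping its two rows and possibly swapping its two columns, and it suffices to treat these two operations. The key observation is that, writing $A={{a\ b}\choose{c\ d}}$, the two rows of $A$ encode the \emph{unordered} pair of edges $\{ab,cd\}$ deleted by $\tau_A$ and the two columns encode the unordered pair $\{ac,bd\}$ added by $\tau_A$; swapping rows or columns of $A$ only permutes entries within these two pairs and changes neither of them. Since conditions (1)--(3) of Definition~\ref{definterc}, as well as the graph $G-ab-cd+ac+bd$, depend on $A$ only through these two pairs (and the vertex set $\{a,b,c,d\}$), both the interchangeability status of the matrix in $G$ and the resulting graph are unchanged, so $\tau_A=\tau_{PAQ}$.

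For item~2, nontriviality forces $A$ to be interchangeable in $G$, so $ab,cd\in E(G)$, $ac,bd\notin E(G)$, and $a,b,c,d$ are pairwise distinct. Then $E(\tau(G))=(E(G)\setminus\{ab,cd\})\cup\{ac,bd\}$, and distinctness of $a,b,c,d$ makes $\{ab,cd\}$ and $\{ac,bd\}$ two disjoint pairs of pairwise distinct edges; hence $E(\tau(G))\setminus E(G)=\{ac,bd\}$ and $E(G)\setminus E(\tau(G))=\{ab,cd\}$, each of cardinality~$2$. For item~3, I would simply check the three conditions: $ab,cd\in E(G)$ by hypothesis; $\{a,b\}\cap\{c,d\}=\varnothing$ since $ab$ and $cd$ lie in distinct components; and $ac,bd\notin E(G)$ and also $ad,bc\notin E(G)$ since no edge of $G$ joins two distinct components --- which yields interchangeability of both ${{a\ b}\choose{c\ d}}$ and ${{a\ b}\choose{d\ c}}$.

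For item~4, put $G'=\tau_A(G)=G-ab-cd+ac+bd$. Existence is a computation: $A^t={{a\ c}\choose{b\ d}}$ satisfies conditions (1)--(3) in $G'$ because $ac,bd\in E(G')$, $ab,cd\notin E(G')$, and $a,b,c,d$ are distinct, so it is interchangeable in $G'$, and $\tau_{A^t}(G')=G'-ac-bd+ab+cd=G$. The main obstacle is uniqueness. I would argue it as follows: if $\sigma$ is any 2-switch with $\sigma(G')=G$, then $\sigma$ is nontrivial for $G'$ (by item~2, $G\ne G'$), so its action matrix $B$ is interchangeable in $G'$; applying item~2 to $\sigma$ in $G'$, the rows of $B$ are exactly the two edges $E(G')\setminus E(G)=\{ac,bd\}$ and the columns of $B$ are exactly the two edges $E(G)\setminus E(G')=\{ab,cd\}$. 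It then remains to check that, up to permuting rows and columns, there is a unique $2\times 2$ matrix whose row pair is $\{ac,bd\}$ and whose column pair is $\{ab,cd\}$: the only way to split the row edges $ac$, $bd$ into columns so as to produce $ab$ and $cd$, rather than $ad$ and $bc$, is the one giving $B=A^t$. Then item~1 forces $\sigma=\tau_B=\tau_{A^t}$, establishing that $\tau_{A^t}$ is the inverse 2-switch of $\tau_A$.
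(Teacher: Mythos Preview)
Your proof is correct and follows exactly the approach the paper intends: unwinding Definitions~\ref{definterc} and~\ref{def2switch}. In fact the paper simply states that ``the proofs of these statements follow directly from the definition of $2$-switch and are left to the reader,'' so your detailed verification---particularly the careful uniqueness argument in item~4, pinning down the action matrix of any inverting 2-switch from the symmetric difference of edge sets and then invoking item~1---goes beyond what the paper itself supplies.
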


\begin{proof}
    The proofs of these statements follow directly from the definition of $2$-switch and are left to the reader.
\end{proof}

One of the advantages of defining 2-switch as a function is that we can describe 2-switch sequences in terms of compositions. Thus, saying that $(\tau_{i})=(\tau_{i})_{1\leq i\leq r}$ is a 2-switch sequence transforming $G$ into $H$ is equivalent to writing $H=\tau_{r}\ldots \tau_{1}(G)$, where we write composition as product. We say that the sequence of 2-switches \((\tau_{i})_{i=1}^{r}\) has \textit{length} \(r\). We assume that the empty sequence $(\varnothing)$ transforms every graph into itself and has length 0.

%%%%%%%%%%%%%%%%%%%%%%%%%%%%%%%%%%%%%%%%%%%%%%%%%%%%%%%%%%%%%%%%%%%%%%%%%%%%%

\section{T-Switch and F-Switch}\label{sectiontswitch}

In order to obtain a version of Theorem \ref{BergeTeo} for the family $\mathcal{F}(s)$, we need to characterize those 2-switches over a forests that preserve the forest structure. But first we do it for trees.

\begin{definition}\label{tsdef}
	A nontrivial 2-switch $\tau$ over a tree $T$ is said to be a \textbf{t-switch} if $\tau(T)$ is a tree.
\end{definition}

\begin{theorem}\label{tsiff}
    Let $T$ be a tree and $ab,cd$ two non-incident edges in $T$. A 2-switch $\tau={{a \ b}\choose{c \ d}}$ is a t-switch if and only if the path in \(T\) between \(a\) and \(d\) has the form $(ab\ldots cd)$ or  the path in \(T\) between \(b\) and \(c\) has the form $(ba\ldots dc)$. 
\end{theorem}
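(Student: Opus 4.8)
I want to characterize when the 2-switch $\tau = {a\ b \choose c\ d}$ applied to a tree $T$ yields another tree. Since $T$ has $n$ vertices and $n-1$ edges, and $\tau(T)$ also has $n-1$ edges (a 2-switch preserves the degree sequence, hence the size), $\tau(T)$ is a tree if and only if it is connected, equivalently if and only if it is acyclic. So the whole problem reduces to: deleting $ab$ and $cd$ and adding $ac$ and $bd$ creates no cycle (equivalently, keeps the graph connected). I will work with connectivity.

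**Key steps.** First I would set up the standard picture: removing the edge $ab$ from the tree $T$ splits it into two subtrees; call $T_a$ the one containing $a$ and $T_b$ the one containing $b$. Removing $cd$ as well cuts one of these (the one containing the edge $cd$) into two further pieces, so $T - ab - cd$ has exactly three connected components (the edges are non-incident, so this is genuinely three pieces). Now I add back $ac$ and $bd$. For $\tau(T)$ to be connected, these two new edges must reconnect the three components into one; since each new edge joins two components, the only way two edges can connect three components is if they form a "path" on the three components — i.e., one new edge joins component $1$ to component $2$, and the other joins component $2$ to component $3$ — with no edge internal to a component. The next step is to translate this combinatorial condition on components into the position of $a,b,c,d$ along paths in $T$. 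The key observation is: $ac$ is "internal" to a component (hence creates a cycle) exactly when $a$ and $c$ lie in the same component of $T - ab - cd$, which happens exactly when the unique $a$–$c$ path in $T$ avoids both $ab$ and $cd$; and similarly for $bd$. Conversely, if $\tau$ is a t-switch, no cycle is created, so both $ac$ and $bd$ join distinct components. I would then do a short case analysis on which of $T_a, T_b$ contains the edge $cd$, and on the relative order of $a,b,c,d$ along the relevant path, to extract exactly the two stated forms: either the $a$–$d$ path in $T$ reads $(a\,b\ldots c\,d)$ (so $b$ is adjacent to $a$ on that path, $c$ adjacent to $d$, and $ab$, $cd$ are the two extreme edges of the $a$–$d$ path), or symmetrically the $b$–$c$ path reads $(b\,a\ldots d\,c)$.

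**The converse direction** is the easier half: if the $a$–$d$ path in $T$ is $(a\,b\ldots c\,d)$, then deleting its two end-edges $ab$ and $cd$ leaves three components — $\{$the tree hanging off $a$ via other edges, but actually $a$ becomes isolated-ish$\}$; more precisely the middle portion $(b\ldots c)$ together with everything hanging off it forms one component $M$, $a$'s side forms another $A$, $d$'s side forms a third $D$ — and adding $ac$ (joining $A$ to $M$, since $c \in M$) and $bd$ (joining $M$ to $D$, since $b \in M$) reconnects everything without a cycle. I should double-check that $a \neq c$ in $A\cap M$ type degeneracies don't occur; since the edges are non-incident and $b \ldots c$ is a genuine subpath, $a \notin M$ and $d \notin M$, so this is clean. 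The symmetric statement handles the other form.

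**Main obstacle.** The conceptual content is not hard, but the bookkeeping is: I must be careful that the two stated path-forms are genuinely the *only* possibilities, i.e. that I have not missed a configuration, and that the two forms are not artificially asymmetric (note they're related by swapping the roles $a \leftrightarrow b$, $c \leftrightarrow d$, which corresponds to the fact that ${a\ b \choose c\ d}$ adds $ac,bd$ whereas the "other" path condition would come from ${a\ b \choose d\ c}$). So the delicate step is organizing the case analysis on (i) where $cd$ sits relative to the $ab$-cut and (ii) the internal vs. cross nature of each of $ac$ and $bd$, and checking that exactly the configurations producing a connected $\tau(T)$ are the two listed. I expect roughly four cases, most dispatched quickly once the "three components, reconnected in a path" principle is established.
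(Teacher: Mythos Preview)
Your proposal is correct and follows essentially the same approach as the paper: both arguments analyze the three components of $T-ab-cd$ and check whether the new edges $ac$, $bd$ reconnect them into a tree. The only minor difference is that for the forward implication the paper argues by contrapositive---observing that if neither stated path form holds then the $a$--$c$ path has the form $(ab\ldots dc)$ or the $b$--$d$ path has the form $(ba\ldots cd)$, and in either case $\tau(T)$ acquires two distinct paths between a pair of vertices---whereas you plan a direct case analysis on which component contains each vertex; both routes amount to the same four-case enumeration of how the edges $ab$ and $cd$ can be oriented along the connecting path in $T$.
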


\begin{proof}
For the only if part, suppose that the unique path in \(T\) between \(a\) and \(d\) has the form $(ab\ldots cd)$. By Definition \ref{tsdef}, we need to verify that $\tau(T)$ is a tree. In order to do this,  divide the action of $\tau$ in three parts. First, notice that ${{a \ b}\choose{c \ d}}$ is interchangeable in $T$. After erasing $ab$ and $cd$, $T$ splits into a forest with three connected components $T_{1}$, $T_2$ and $T_3$, such that $a\in T_{1}$, $(b\ldots c)\subset T_{2}$ and $d\in T_{3}$. Then, we connect $T_{1}$ to $T_{2}$ with $ac$, obtaining a new forest with two components, $T_{3}$ and $T_{12}$, where $T_{12}$ contains the path $(b\ldots ca)$. Finally, we get $\tau(T)$ by connecting $T_{3}$ to $T_{12}$ through $bd$. Hence, $\tau(T)$ is a tree. 

If the unique path in \(T\) between \(b\) and \(c\) has the form $(ba\ldots dc)$, since
\[
{{a \ b}\choose{c \ d}} = {{1 \ 0}\choose{0 \ 1}}{{b \ a}\choose{d \ c}} {{0 \ 1}\choose{1 \ 0}},
\]
by the previous case and Lemma \ref{switchelemprop} we conclude that also $\tau(T)$ is a tree in this case.
Therefore, if the path in \(T\) between \(a\) and \(d\) has the form $(ab\ldots cd)$ or  the path in \(T\) between \(b\) and \(c\) has the form $(ba\ldots dc)$, then $\tau={{a \ b}\choose{c \ d}}$ is a t-switch.

Conversely, suppose that neither the unique path between \(a \) and \(d\) in \(T\) has the form $(ab\ldots cd)$ nor the unique path between \(b \) and \(c\) in \(T\) has the form $(ba\ldots dc)$. Then, either the path between $a$ and $c$
has the form $(ab\ldots dc)$ or the path between $b$ and $d$ has the form $(ba\ldots cd)$. In the first case, if $\tau$ is not trivial, $\tau(T)$ contains $2$ paths between $b$ and $d$. Similarly, in the second case, if $\tau$ is not trivial, $\tau(T)$ contains $2$ paths between $a$ and $c$. In either case, $\tau$ is not a t-switch.
Therefore, if $\tau={{a \ b}\choose{c \ d}}$ is a t-switch then the path in \(T\) between \(a\) and \(d\) has the form $(ab\ldots cd)$ or  the path in \(T\) between \(b\) and \(c\) has the form $(ba\ldots dc)$. 
\end{proof}

The next easy to prove fact will be used several times along this work.  

\begin{theorem}
	\label{=s=k}
	Any two forests with the same degree sequence have the same number of connected components.
\end{theorem}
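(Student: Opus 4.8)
The plan is to reduce the statement to the elementary identity relating the order, size, and number of components of a forest, and then observe that all the quantities involved are encoded in the degree sequence. First I would recall that for any forest $F$ one has
\[
\|F\| = |F| - \kappa(F),
\]
equivalently $\kappa(F) = |F| - \|F\|$. This is standard: each of the $\kappa(F)$ connected components of $F$ is a tree, a tree on $m$ vertices has exactly $m-1$ edges, and summing over the components gives $\|F\| = |F| - \kappa(F)$. (I would either cite this or dispatch it in one line by induction on $\|F\|$, deleting an edge to increase the component count by one.)

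Next I would note that both $|F|$ and $\|F\|$ are determined by $s(F) = (d_1,\dots,d_n)$. Indeed, with our convention that the vertex set of a graph on $n$ vertices is $[n]$ and the degree sequence lists one term $d_i$ for \emph{every} vertex $i \in [n]$ (including isolated vertices), the order $|F|$ equals the length $n$ of the sequence. By the handshake lemma, $\|F\| = \tfrac{1}{2}\sum_{i=1}^{n} d_i$. Hence $\kappa(F) = n - \tfrac{1}{2}\sum_{i=1}^{n} d_i$ is a function of $s(F)$ alone.

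Finally, if $F_1, F_2 \in \mathcal{F}(s)$ for a graphical sequence $s = (d_1,\dots,d_n)$, then $|F_1| = |F_2| = n$ and $\|F_1\| = \|F_2\| = \tfrac12\sum_{i=1}^n d_i$, so
\[
\kappa(F_1) = |F_1| - \|F_1\| = |F_2| - \|F_2\| = \kappa(F_2),
\]
which is the claim.

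There is no real obstacle here: the argument is a one-step counting computation. The only point requiring a modicum of care is bookkeeping — making sure the degree-sequence convention records a term for each vertex, so that the length of $s$ genuinely equals $|F|$ and isolated vertices are not silently dropped — together with the standard fact $\|F\| = |F| - \kappa(F)$ for forests, which is the sole ingredient specific to the forest hypothesis (it fails for general graphs, where edges can lie on cycles).
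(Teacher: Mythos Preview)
Your proof is correct and follows exactly the same route as the paper: use $\|F\| = |F| - \kappa(F)$ for forests together with the handshake identity $\|F\| = \tfrac12\sum_i d_i$ to conclude that $\kappa(F)$ is determined by the degree sequence. The paper's version is just the one-line chain $n-\kappa(F_1)=\|F_1\|=\tfrac12\sum d_i=\|F_2\|=n-\kappa(F_2)$, which is precisely your argument compressed.
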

\begin{proof}
Let $s=(d_{1},\ldots ,d_{n})$ be the  degree sequence of the forests $F_{1}$ and $F_{2}$. Then, \[n-\kappa(F_{1})=\left\|F_{1}\right\|=\frac{1}{2} \sum_{i=1}^{n}d_{i}=\left\|F_{2}\right\|=n-\kappa(F_{2}). \qedhere\]
\end{proof}

If $\mathcal{F}(s)$ contains a tree, then Theorem \ref{=s=k} implies, in particular, that every graph in $\mathcal{F}(s)$ is a tree. This means that any result obtained for forests with a given degree sequence also holds for trees with a given degree sequence. A useful way to look at Theorem \ref{=s=k} is that there is no sequence of 2-switches transforming a forest into another with a different number of components. 

\begin{definition}
    \label{fsdef}
    A nontrivial 2-switch $\tau$ over a forest $F$ is said to be an \textbf{f-switch} if $\tau(F)$ is a forest.
\end{definition}

Using t-switch, we can characterize when a 2-switch over a forest is an f-switch. It will be useful and intuitive from now on to say that $\tau={{a \ b}\choose{c \ d}}$ is a 2-switch \textit{between} the edges $ab$ and $cd$ of $G$, whenever $\tau$ is nontrivial in $G$.

\begin{theorem}\label{teofswitch}
	Let $\tau$ be a 2-switch between two disjoint edges $e_{1}$ and $e_{2}$ of a forest $F$. Let $\tau$ be a 2-switch over \(F\) between $e_{1}$ and $e_{2}$. The 2-switch \(\tau\) is an f-switch over $F$ if and only if: 
	\begin{enumerate}
		\item $\tau$ is a t-switch, if $e_{1}$ and $e_{2}$ are in the same connected component; 
		\item $\tau$ is a 2-switch, if $e_{1}$ and $e_{2}$ are in different connected components.
	\end{enumerate}
\end{theorem}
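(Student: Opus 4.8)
The plan is to split the proof into the two cases given in the statement, handling each direction of the "if and only if" within each case.

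First I would dispose of the case where $e_1$ and $e_2$ lie in different connected components of $F$. By Lemma \ref{switchelemprop}(3), any nontrivial 2-switch between edges in distinct components is automatically available (both orientations are interchangeable), so there is nothing to check about interchangeability; the point is only whether $\tau(F)$ is a forest. Write $e_1 = ab$ in component $C_1$ and $e_2 = cd$ in component $C_2$, with $\tau = {{a\ b}\choose{c\ d}}$. Deleting $ab$ splits $C_1$ into two trees (one containing $a$, one containing $b$), and deleting $cd$ splits $C_2$ into two trees (one containing $c$, one containing $d$); the rest of $F$ is untouched. Adding $ac$ joins the $a$-tree to the $c$-tree, and adding $bd$ joins the $b$-tree to the $d$-tree. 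Since these four trees are pairwise vertex-disjoint, neither new edge can create a cycle, so $\tau(F)$ is acyclic, i.e.\ a forest. Thus whenever $e_1,e_2$ are in different components, every nontrivial 2-switch between them is an f-switch, and conversely every f-switch between them is (trivially) a 2-switch. This gives item 2.

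Next I would handle the case where $e_1$ and $e_2$ lie in the same connected component $T$, which is a tree. Here interchangeability is a genuine constraint, but a nontrivial 2-switch between $e_1$ and $e_2$ is by hypothesis interchangeable, so $\tau$ restricted to $T$ is a nontrivial 2-switch over the tree $T$, and the edges of $F$ outside $T$ play no role. If $\tau$ is a t-switch, then by Definition \ref{tsdef} $\tau(T)$ is a tree, hence replacing $T$ by $\tau(T)$ inside $F$ yields a forest, so $\tau$ is an f-switch. Conversely, if $\tau$ is an f-switch, then $\tau(F)$ is a forest; restricting attention to the vertex set of $T$ (which is exactly the vertex set on which $\tau$ acts), $\tau(T)$ is an acyclic graph on $|T|$ vertices with $\|T\|$ edges. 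Since $\tau$ preserves the degree sequence, $\tau(T)$ has the same number of edges as $T$, namely $|T|-1$; an acyclic graph on $|T|$ vertices with $|T|-1$ edges is connected, hence a tree. Therefore $\tau(T)$ is a tree and $\tau$ is a t-switch. This gives item 1.

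The main obstacle — really the only subtle point — is making precise that a 2-switch between two edges of the same component $T$ "is" a 2-switch over $T$ and does not interact with the rest of $F$: since all four vertices $a,b,c,d$ lie in $T$ and the operation only removes $ab,cd$ and adds $ac,bd$, the components of $F$ other than $T$ are literally unchanged, and $\tau(F)$ is a forest if and only if $\tau(T)$ is a tree. Once that observation is in place, the counting argument ($|T|-1$ edges plus acyclic implies connected) closes the converse cleanly, and everything else is a direct appeal to Definitions \ref{tsdef}, \ref{fsdef} and Lemma \ref{switchelemprop}(3). I would also remark that Theorem \ref{tsiff} can be invoked to restate item 1 in terms of the shape of the relevant path, though for this proof the abstract characterization via Definition \ref{tsdef} suffices.
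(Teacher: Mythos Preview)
Your proof is correct and follows essentially the same approach as the paper's: both split into the same two cases, handle the different-components case by the four-subtree splitting-and-rejoining argument, and reduce the same-component case to the definition of t-switch. Your converse for the same-component case is slightly more explicit than the paper's (you count edges to deduce that an acyclic $\tau(T)$ on $|T|$ vertices with $|T|-1$ edges must be connected, whereas the paper simply asserts that a non-t-switch on $T$ produces a cycle), but this is the same underlying fact viewed contrapositively.
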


\begin{proof}
Let \(e'_{1}\) and \(e'_{2}\) be the edges that \(\tau\) adds to \(F\). For the only if part, assume that $\tau$ acts in that way over $F$. By Definition \ref{fsdef}, we need to verify that $\tau(F)$ is a forest. There is nothing to prove if $e_{1}$ and $e_{2}$ are in the same component, because we already know that t-switches preserve the tree structure. If $e_{1}\in T_{1}$ and $e_{2}\in T_{2}$, where $T_{1}$ and $T_{2}$ are two different components of $F$, we analyze the action of $\tau$ step by step. First, by erasing $e_{j}$, the component $T_{j}$ splits into the  sub-components $T'_{j}$ and $T''_{j}$. The number of components of the graph increases momentarily by 2 and each of the vertices involved in $\tau$ lies now in a distinct sub-component. Then, by adding the new edges $e'_{1}$ and $e'_{2}$, we see that $e'_{1}$ connects $T'_{1}$ to $T'_{2}$ (or $T'_{1}$ to $T''_{2}$) and $e'_{2}$ connects $T''_{1}$ to $T''_{2}$ (or $T''_{1}$ to $T'_{2}$). So, $\tau(F)$ is a forest. 

For the if part, simply note that if $\tau$ was not a t-switch, then $\tau(F)$ would contain a cycle. %$\square$ \\
\end{proof}
%%%%%%%%%%%%%%%%%%%%%%%%%%%%%%%%%%%%%%%%%%%%%%%%%%%%%%%%%%%%%%%%%%%%%%%%%%%%%

\section{The forest transition theorem}\label{sectionFTT}
In this section we are going to prove that given two forests $F_1$ and $F_2$ with the same degree sequence, there is a sequences of $f$-switches that transforms $F_1$ into $F_2$.
The process works by deleting vertices of degree one whose incident edge is in $E(F_1)\cap E(F_2)$,
and using $f$-switches to obtain more of those vertices once we cannot delete any more.
Vertices of degree \(1\) are called leaves.  Notice that if $W$ is a set of leaves of $G$, then \(\kappa(G)\geq\kappa(G- W)\). Furthermore, \(\kappa(G)>\kappa(G- W)\) only if we delete leaves that are neighbors of each other. Let \(s\) be a graphical sequence and \(G, H \in \mathcal{G}(s)\). Notice that $s$ determines which vertices are leaves,
hence $G$ and $H$ have the same set of leaves. A leaf $\ell$ is said to be a \textbf{trimmable leaf} of the graphs $G$ and $H$ if $\ell$ has the same neighbor in both $G$ and $H$.  We denote the set of trimmable leaves between \(G\) and \(H\) by \(\Lambda(G,H)\), or just \(\Lambda\) when \(G\) and \(H\) are clear from the context. 
As an example, the set of trimmable leaves between the graphs $G_0$ and $G_2$ in Figure \ref{treeunitree}
are $\Lambda(G_0,G_2)=\{5,7\}$ because $52$ and $74$ are in both $G_0$ and $G_2$, whereas the leaf $6$ is not trimmable because it is adjacent to $3$ in $G_0$ and to $1$ in $G_2$.

The next lemma is one of the preliminary steps to the main result of this section, which will use the simple idea of consecutive deletions of trimmable leaves to transform a forest into another with the same  degree sequence.

\begin{lemma}
	\label{tauiL}
	Let $s$ be a graphical  sequence and $F,F'\in \mathcal{F}(s)$. Let $\Lambda$ be a set of trimmable leaves of $F$ and $F'$. Suppose that $(\tau_{i})_{i=1}^{r}$ is an f-switch sequence transforming $F-\Lambda$ into $F'-\Lambda$. Then, $(\tau_{i})_{i=1}^{r}$ is an f-switch sequence transforming $F$ into $F'$.
\end{lemma}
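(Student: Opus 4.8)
The plan is to show that attaching the trimmable leaves in $\Lambda$ back onto the graphs does not interfere with the f-switch sequence, i.e., that each $\tau_i$ continues to act the same way after reattachment and preserves the forest structure. First I would set up notation: write $G_0 = F-\Lambda$ and $G_i = \tau_i\cdots\tau_1(G_0)$, so that $G_r = F'-\Lambda$ by hypothesis; write $\widehat G_i$ for the graph obtained from $G_i$ by adding back every vertex $\ell\in\Lambda$ together with its (common) edge to its neighbor in $F$ and $F'$. Thus $\widehat G_0 = F$ and I must show $\widehat G_r = F'$ and, more importantly, that $\widehat G_i = \tau_i\cdots\tau_1(F)$ with every intermediate graph a forest. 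Note that $\widehat G_r = F'$ is immediate once we know the sequence reattaches correctly, since $G_r = F'-\Lambda$ and the edges from $\Lambda$ to their neighbors are the same in $F'$ as in $F$.

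The key step is the inductive claim: for each $i$, $\tau_i$ is interchangeable in $\widehat G_{i-1}$ if and only if it is interchangeable in $G_{i-1}$, and in that case $\tau_i(\widehat G_{i-1}) = \widehat{G_i}$. For this I would examine the action matrix $\tau_i = {{a\ b}\choose{c\ d}}$. The crucial observation is that the four vertices $a,b,c,d$ are all vertices of $G_{i-1}$ (since $\tau_i$ acts nontrivially there), hence none of them lies in $\Lambda$ — a leaf in $\Lambda$ has the same single incident edge throughout and can never be one of the four vertices moved by a nontrivial 2-switch in a graph not containing it. Therefore the edges $ab,cd$ deleted and $ac,bd$ added by $\tau_i$ are untouched by, and do not touch, the pendant edges from $\Lambda$; the interchangeability conditions (Definition~\ref{definterc}) for $\tau_i$ read identically in $G_{i-1}$ and in $\widehat G_{i-1}$, and applying $\tau_i$ commutes with adjoining the pendant edges. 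This gives $\tau_i(\widehat G_{i-1}) = \widehat{G_i}$.

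It remains to check that each $\widehat G_i$ is a forest. Since $(\tau_i)$ is an f-switch sequence on the $G_i$, each $G_i$ is a forest; and $\widehat G_i$ is obtained from the forest $G_i$ by adding vertices of degree $1$, each joined to an existing vertex by a single new edge, which cannot create a cycle. Hence every $\widehat G_i$ is a forest, so each $\tau_i$ is a genuine f-switch over $\widehat G_{i-1}$, and $(\tau_i)_{i=1}^r$ transforms $\widehat G_0 = F$ into $\widehat G_r = F'$ through forests, as required.

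I do not anticipate a serious obstacle here; the only point needing care is the observation that no vertex of $\Lambda$ can appear in an action matrix acting nontrivially on a graph that omits it, which is what decouples the leaf-reattachment from the switch sequence. One should also remark that the hypothesis "$\Lambda$ is a set of trimmable leaves of $F$ and $F'$" is exactly what guarantees the reattached pendant edges agree for $F$ and for $F'$, so that the endpoints match up at step $0$ and step $r$.
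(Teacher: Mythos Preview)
Your proposal is correct and follows essentially the same approach as the paper's proof: both hinge on the observation that no vertex of $\Lambda$ can appear in an action matrix of a nontrivial 2-switch on $F-\Lambda$, so the switches commute with attaching or detaching the pendant leaves, and both conclude by noting that adjoining degree-$1$ vertices to a forest cannot create a cycle. The only cosmetic difference is direction---the paper starts from $F_i=\tau_i(F_{i-1})$ and strips $\Lambda$ off, while you start from $G_i$ and graft $\Lambda$ back on---but the content is identical.
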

\begin{proof}
Let $F_0=F$ and $F_i=\tau_i(F_{i-1})$. As none of the vertices in $\Lambda$ is involved in any of the f-switches
(recall that trivial $2$-switches are not f-switches), $F_i-\Lambda=\tau_i(F_{i-1}-\Lambda)$ for every $1\leq i\leq r$.
Hence $(\tau_i)$ is a sequence of $2$-switches transforming $F$ into $F'$.

We need to see now that $(\tau_i)$ are f-switches for the transformation of $F$ into $F'$.
But, as $F_i-\Lambda$ is obtained from $F_i$ by removing vertices of degree $1$, hence $F_i-\Lambda$ has the same
cycles as $F_i$. Thus, as every $F_i-\Lambda$ is a forest, every $F_i$ is a forest.
Therefore $(\tau_i)$ is a sequence of f-switches transforming $F$ into $F'$.
\end{proof}

\begin{lemma} \label{ponerhojascompartidas}
	Let $s$ be a graphical sequence and $F,F'\in \mathcal{F}(s)$. If \(\Lambda(F,F')=\varnothing\), then there exists an f-switch \(\tau\) over \(F\) such that  \(\Lambda(\tau(F),F')\neq \varnothing\).
\end{lemma}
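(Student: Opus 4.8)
The plan is to pick a leaf $\ell$ (which, since $s$ determines the leaves, is a leaf in both $F$ and $F'$) and use an f-switch to make $\ell$ adjacent in $F$ to the same vertex it is adjacent to in $F'$, without destroying the forest structure. Write $\ell v \in E(F)$ and $\ell w \in E(F')$; since $\Lambda(F,F')=\varnothing$ we have $v\neq w$, so $\ell w \notin E(F)$. The natural move is a 2-switch that deletes $\ell v$ and some edge incident to $w$ in $F$ and adds $\ell w$. The subtlety is choosing the second edge so that the matrix is interchangeable \emph{and} the resulting 2-switch is an f-switch; this is where Theorems \ref{teofswitch} and \ref{tsiff} come in.

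First I would split into two cases according to whether $\ell$ and $w$ lie in the same component of $F$. \textbf{Case 1: $\ell$ and $w$ are in different components of $F$.} Let $wx$ be any edge of $F$ incident to $w$ (such an edge exists because $w$ has degree at least $1$ in $F'$, hence in $F$; in fact $w$ has degree at least $2$ since it is adjacent to $\ell$ in $F'$ but to a different vertex in $F$, so $w$ is not a leaf). Consider $\tau = {\ell\ v \choose w\ x}$. The edges $\ell v$ and $wx$ are disjoint (they are in different components, and $\ell\neq w$, $\ell \neq x$, $v\neq w$; if $v=x$ that is fine as long as the two edges are distinct, which they are). By Lemma \ref{switchelemprop}(3), since $\ell v$ and $wx$ are in distinct components, $\tau$ is interchangeable in $F$, and by Theorem \ref{teofswitch}(2) it is an f-switch. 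After applying $\tau$, $\ell$ is adjacent to $w$, so $\ell \in \Lambda(\tau(F),F')$.

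\textbf{Case 2: $\ell$ and $w$ are in the same component of $F$.} Now consider the unique path in $F$ from $\ell$ to $w$; write it as $(\ell\, v\, \ldots\, x\, w)$, where $x$ is the neighbor of $w$ on this path (note $x\neq \ell$ since $v\neq w$ forces the path to have length at least $2$, and $x\neq v$ is possible only if the path has length exactly $2$, which is allowed). Take $\tau = {\ell\ v \choose w\ x}$. Then $\ell v, wx \in E(F)$, the four vertices $\ell, v, w, x$ are distinct (the path is simple and has length $\geq 2$), and we need $\ell w, vx \notin E(F)$: indeed $\ell w \notin E(F)$ since $\ell$ has its unique incident edge $\ell v$ and $v\neq w$, and $vx\notin E(F)$ since the path $(\ell v \ldots x w)$ is the unique path and an edge $vx$ would create a cycle unless the path is $(\ell v x w)$, in which case $vx$ is already on it — so I should instead present this as: the path between $\ell$ and $w$ has the form $(\ell\, v\, \ldots\, x\, w)$, which is exactly the form $(ab\ldots cd)$ of Theorem \ref{tsiff} with $a=\ell$, $b=v$, $c=x$, $d=w$. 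Hence $\tau = {\ell\ v \choose x\ w}$ is a t-switch, and by Theorem \ref{teofswitch}(1) an f-switch; after applying it $\ell$ is adjacent to $w$, giving $\ell\in\Lambda(\tau(F),F')$. (The bookkeeping about which column ordering of the matrix to use — $w\,x$ versus $x\,w$ — must be done carefully so that the edge \emph{added} incident to $\ell$ is $\ell w$, not $\ell x$.)

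The main obstacle is the second case: one must make sure the chosen second edge $xw$ is genuinely available (i.e. the matrix is interchangeable, which requires $\ell w\notin E(F)$ and $vx \notin E(F)$) and that the orientation of the action matrix is the one producing a t-switch in the sense of Theorem \ref{tsiff}. Reading the path from $\ell$ to $w$ and matching it against the pattern $(ab\ldots cd)$ in Theorem \ref{tsiff} resolves this cleanly, so the proof reduces to a careful invocation of the already-established characterizations rather than any new combinatorial work.
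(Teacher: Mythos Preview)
Your Case 2 contains a genuine error. With $x$ chosen as the neighbour of $w$ \emph{on} the path $(\ell\,v\ldots x\,w)$, the t-switch $\tau={\ell\ v\choose x\ w}$ deletes $\ell v,\,xw$ and adds $ac=\ell x$ and $bd=vw$ --- so after $\tau$ the leaf $\ell$ is adjacent to $x$, not to $w$, and $\ell\notin\Lambda(\tau(F),F')$. Swapping the bottom row to ${\ell\ v\choose w\ x}$ does put $\ell$ next to $w$, but then the four vertices fall into the pattern ruled out by Theorem~\ref{tsiff}: removing $\ell v$ and $wx$ leaves $v$ and $x$ in the same piece of the tree, so adding $vx$ creates a cycle (or, when the path has length $3$, the matrix is not even interchangeable since $vx\in E(F)$). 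Also note that if the path has length $2$ then $x=v$ and you have only three vertices, so no nontrivial 2-switch is available at all.

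The fix is to take $x$ to be a neighbour of $w$ \emph{off} the path from $\ell$ to $w$; then the path from $\ell$ to $x$ reads $(\ell\,v\ldots w\,x)$, so ${\ell\ v\choose w\ x}$ is a t-switch by Theorem~\ref{tsiff} and it adds $\ell w$ as desired. But this requires $\deg w\ge 2$, and your parenthetical argument for that is wrong: if $\ell$ and $w$ form a $K_2$ component in $F'$, then $w$ is a leaf. This is exactly why the paper does not pick an arbitrary leaf $\ell$: it first selects a leaf whose $F'$-neighbour has degree $\ge 2$ (such a leaf exists unless every vertex has degree $1$, which is treated separately and falls entirely under your Case~1). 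Once you make that choice, your two-case split by component works and coincides with the paper's argument.
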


\begin{proof}
Assume $\Lambda(F,F')=\varnothing$. 
We split the proof in two cases: either there is a leaf whose neighbor in $F'$  has degree at least $2$, or every vertex has degree $1$.

For the first case, let $\ell$ be a leaf such that its neighbor $u$ in $F'$ has degree at least $2$,  and let $v$ be the neighbor of $\ell$ in $F$.
If $\ell$ and $u$ are in different connected components of $F$, then let $w$ be a neighbor of $u$ in 
$F$ and perform the $2$-switch $\tau={{\ell \ v}\choose{u \ w}}$. Notice that $\tau$ is an f-switch because 
$v\ell$ and $uw$ are in different connected components of $F$.
If $\ell$ and $u$ are in the same connected component, let $(\ell v \ldots u)$ be the path from $\ell$ to $u$ in $F$.
As $\deg u \geq 2$, there is a neighbor of $u$ that is not in  $(\ell v \ldots u)$. Let $w$ be such a neighbor.
Then the $2$-switch $\tau={{\ell \ v}\choose{u \ w}}$ is an f-switch by Theorems \ref{tsiff} and \ref{teofswitch}.
In either case, $u\ell \in E(\tau(F))$ and $u\in \Lambda(\tau(F),F')$.

For the second case, let $\ell$ be any leaf, let $v$ and $u$ be the neighbors of $\ell$ in $F$ and $F'$ respectively,
and let $w$ be the neighbor of $u$ in $F$.
The $2$-switch $\tau={{\ell \ v}\choose{u \ w}}$ is an f-switch, because $v\ell$ and $uw$ are in different connected components. Furthermore, $u\in \Lambda(\tau(F),F')$.

Therefore, there is an f-switch   \(\tau\) over \(F\) such that  \(\Lambda(\tau(F),F')\neq \varnothing\).
\end{proof}

The next result states that given two forests with the same degree sequence, there is a sequence of f-switches transforming one into the other.
Before proceeding with the proof, we need to note two things. First, it is sufficient to prove the result for forests without
isolated vertices, because any two forests with the same degree sequence coincide in their isolated vertices trivially. It is easy to check by inspection that the result follows for forests of order $n\leq 4$.
We are now ready to proceed.

\begin{theorem}
	\label{IPTT}
	Let $s$ be a graphical sequence and $F,F'\in \mathcal{F}(s)$. Then there is a sequence of f-switches transforming $F$ into $F'$.
\end{theorem}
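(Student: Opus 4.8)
The plan is to prove the statement by strong induction on the number of edges that $F$ and $F'$ \emph{disagree} on, or more conveniently by induction on $\|F\|=n-\kappa(F)$ combined with the two preparatory lemmas already established. First I would reduce to the case where $F$ and $F'$ have no isolated vertices and $n\geq 5$, as the remark preceding the theorem permits. The core of the argument is then the following loop: given $F,F'\in\mathcal{F}(s)$, I want to produce a nonempty set of trimmable leaves so that I can pass to smaller forests $F-\Lambda$ and $F'-\Lambda$ and invoke the inductive hypothesis, finally lifting the resulting f-switch sequence back up via Lemma \ref{tauiL}.

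The key steps, in order, are: (1) If $\Lambda(F,F')\neq\varnothing$, pick the set $\Lambda=\Lambda(F,F')$ of all trimmable leaves; then $F-\Lambda$ and $F'-\Lambda$ are forests with the same degree sequence (deleting the same leaves from both, since $s$ determines the leaves and trimmable leaves share neighbors), and they have strictly fewer edges, so by induction there is an f-switch sequence transforming $F-\Lambda$ into $F'-\Lambda$; Lemma \ref{tauiL} then upgrades this to an f-switch sequence transforming $F$ into $F'$. (2) If $\Lambda(F,F')=\varnothing$, apply Lemma \ref{ponerhojascompartidas} to obtain a single f-switch $\tau$ over $F$ with $\Lambda(\tau(F),F')\neq\varnothing$; now $\tau(F)$ is a forest in $\mathcal{F}(s)$ (f-switches preserve the degree sequence and, by Theorem \ref{teofswitch}, the forest structure) with the same number of edges as $F$, and applying case (1) to the pair $(\tau(F),F')$ yields an f-switch sequence from $\tau(F)$ to $F'$; prepending $\tau$ gives the desired sequence from $F$ to $F'$. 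One has to be slightly careful that the induction is well-founded: the quantity that strictly decreases is $\|F\|$ (equivalently $n-\kappa(F)$, which is the same for $F$ and $F'$ by Theorem \ref{=s=k}), and case (2) is a finite "one-step" detour that feeds into case (1), so no infinite regress occurs — after at most one preliminary f-switch we are always in a situation where at least one leaf can be trimmed, which strictly drops the edge count.

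The main obstacle I anticipate is making the bookkeeping of the induction airtight, specifically: verifying that $F-\Lambda$ and $F'-\Lambda$ genuinely have the \emph{same} degree sequence (one must note that trimming a leaf $\ell$ decreases the degree of its common neighbor by exactly $1$ in both forests, so the resulting degree sequences agree), confirming that $F-\Lambda$ is still a forest with no isolated vertices or else handling the base case when the trimming exhausts the forest, and ensuring that when $\Lambda(F,F')\neq\varnothing$ we do strictly decrease $\|F\|$ so the induction terminates. A secondary subtlety is that in case (2) we do not directly invoke the inductive hypothesis on $(\tau(F),F')$ but rather re-enter case (1); since $\|\tau(F)\|=\|F\|$, I should phrase the induction so that case (1) is the inductive step and case (2) is just a reduction to case (1) at the same level — this is clean provided I state the induction on the number of trimmable-leaf-deletions needed, or simply argue: among all $F''\in\mathcal{F}(s)$ reachable from $F$ by f-switches, pick one maximizing $|\Lambda(F'',F')|$; Lemma \ref{ponerhojascompartidas} forces this maximum to be positive, and then case (1) applies.
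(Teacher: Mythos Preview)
Your proposal is correct and follows essentially the same approach as the paper: induct, trim common leaves when $\Lambda(F,F')\neq\varnothing$ and invoke Lemma~\ref{tauiL}, and when $\Lambda(F,F')=\varnothing$ apply Lemma~\ref{ponerhojascompartidas} first to reduce to the previous case. The only cosmetic difference is that the paper inducts on the order $n$ rather than on $\|F\|$, but since trimming leaves decreases both quantities simultaneously these are interchangeable; your bookkeeping concerns (same degree sequence after trimming, well-foundedness of the induction, case~(2) feeding into case~(1) at the same level) are all legitimate but easily handled exactly as you outline.
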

\begin{proof}
Suppose $F$ and $F'$ have no isolated vertices.
We use induction on the order $n$ of $F$ and $F'$. If $n\leq 4$, the statement is true. Hence, let $n>4$, and suppose that every pair of forests of order less than $n$ with the same degree sequence can be transformed into each other by a sequence of f-switches. We have two cases: $\Lambda(F,F')\neq \varnothing$ and $\Lambda(F,F')= \varnothing$.

If $\Lambda(F,F')\neq \varnothing$, consider $F-\Lambda$ and $F'-\Lambda$. These are two forests of order $n_{1}<n$, with the same  degree sequence $s_{1}$. So, inductive hypothesis applies to $F-\Lambda$ and $F'-\Lambda$: there exists an f-switch sequence $(\tau_{i})$ transforming $F-\Lambda$ into $F'-\Lambda$. Hence, by Lemma \ref{tauiL}, the sequence $(\tau_{i})$ is an f-switch sequence from $F$ to $F'$.

The case $\Lambda(F,F') = \varnothing$, by Lemma \ref{ponerhojascompartidas}, can be reduced to the previous case. \qedhere
\end{proof}

Theorem \ref{IPTT} guarantees not only the existence of a transforming f-switch sequence between any two forests in $\mathcal{F}(s)$; moreover, its proof contains an algorithm that returns a transforming f-switch sequence between any two forest of \(\mathcal{F}(s)\). We make explicit such an algorithm.

% Cambie el algoritmo, estaba escrito estilo spaguetti code...eso es de los 60 (basic y amigos) y se abandono en lo 80-90. Es una forma de programar espantosa.
%%%%%%%%%%%%%%%%%%%%%%%%%%%%%%%%%%%%%%%%%%%%%%%%%%%%%%%%%%%%%%%%%%%%%%%
%
%%%%%%%%%%%%%%%%%%%%%%%%%%%%%%%%%%%%%%%%%%%%%%%%%%%%%%%%%%%%%%%%%%%%%%%
\begin{algorithm}
	\label{algtrans}
	\textbf{Transition algorithm}\\
	INPUT: two forests $F,F' \in \mathcal{F}(s)$.
	 \begin{enumerate}
	 	\item If \(F=F'\): RETURN \((\varnothing)\).
		\item Let $r=0$ and $\Lambda=\Lambda(F,F')$.
		\item While \(F\neq F'\):
			\begin{enumerate}
			\item If $\Lambda=\varnothing$:
				\begin{enumerate}
					\item Let \(r=r+1\).
					\item If every vertex in $F'$ has degree $1$ choose a leaf \(\ell \in V(F')\). Else choose a leaf 
					$\ell \in V(F')$ such that its neighbor \(u\) in \(F'\) has degree at least $2$.		
					\item Find the f-switch \(\tau\) such that \(\ell\) is trimmable between \(\tau(F)\) and \(F'\) (such f-switch exists by Lemma \ref{ponerhojascompartidas}).
					\item Let \(\tau_{r}=\tau\), \(F=\tau(F)\), and $\Lambda=\Lambda(F,F')$.
				\end{enumerate}
			\item Let \(F=F-\Lambda\) and \(F'=F'-\Lambda\).
			\end{enumerate}
		\item RETURN \((\tau_{i})_{i=1}^{r}\).
	\end{enumerate}	
\end{algorithm}

%%%%%%%%%%%%%%%%%%%%%%%%%%%%%%%%%%%%%%%%%%%%%%%%%%%%%%%%%%%%%%%%%%%%%%%
%
%%%%%%%%%%%%%%%%%%%%%%%%%%%%%%%%%%%%%%%%%%%%%%%%%%%%%%%%%%%%%%%%%%%%%%%
Note that in each step of Transition Algorithm either \(F=F'\) or at least one leaf is removed (equivalently, one edge is removed). Thus, the Transition Algorithm runs at most \(|E(F')-E(F)|\) times.
This number can be improved by $1$.
First we need a technical lemma.

\begin{lemma}
	\label{etaneq1}
	Let \(s\) be a graphical sequence and let \(G,H \in \mathcal{G}(s)\). Then \(| E(G) - E(H)|\neq 1\). 
\end{lemma}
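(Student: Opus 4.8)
The plan is to show that two graphs with the same degree sequence cannot differ in exactly one edge. The key observation is parity: since $G$ and $H$ have the same degree sequence, they have the same number of edges, so $|E(G)| = |E(H)|$. Hence $|E(G) - E(H)| = |E(H) - E(G)|$, and both symmetric-difference halves have the same cardinality. If $|E(G) - E(H)| = 1$, then also $|E(H) - E(G)| = 1$, so the symmetric difference $E(G) \triangle E(H)$ consists of exactly two edges, say $ab \in E(G) \setminus E(H)$ and $cd \in E(H) \setminus E(G)$.

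First I would argue that $\{a,b\} \cap \{c,d\} \neq \varnothing$: otherwise the four vertices $a,b,c,d$ are distinct, and then consider the degree of $a$ in $G$ versus in $H$. Every edge incident to $a$ other than $ab$ lies in $E(G) \cap E(H)$ (it is not in the symmetric difference, which only contains $ab$ and $cd$, neither of which is incident to $a$ except $ab$). So $\deg_G(a) = \deg_{G\cap H}(a) + 1$ while $\deg_H(a) = \deg_{G \cap H}(a)$ (since $a$ is incident to neither $cd$ nor $ab$ in $H$), contradicting $\deg_G(a) = \deg_H(a)$. Therefore $ab$ and $cd$ share a vertex; write $cd = ac$ without loss of generality (relabel so the common vertex is $a$ and the other endpoint of the $H$-edge is $c$, with $c \neq b$ since $ab \neq ac$).

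Now compare degrees at the vertex $b$. The only edge of the symmetric difference incident to $b$ is $ab$, so $\deg_G(b) = \deg_{G \cap H}(b) + 1$ and $\deg_H(b) = \deg_{G \cap H}(b)$, again contradicting $\deg_G(b) = \deg_H(b)$. (Symmetrically one could look at $c$ instead.) This exhausts all cases, so $|E(G) - E(H)| = 1$ is impossible, which is the claim.

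I do not anticipate a real obstacle here — the statement is essentially a parity/degree-counting triviality, and the paper itself flags it as a "technical lemma." The only mild care needed is the bookkeeping in the overlapping-edge case to make sure the relabeling is legitimate and that one correctly identifies a vertex (namely $b$) at which the two graphs' degrees must differ; an alternative, slicker phrasing is simply to note that $\sum_i |\deg_G(i) - \deg_H(i)|$ must be even (each edge of the symmetric difference contributes $1$ to the degree defect at each of its two endpoints, so the total defect $\sum_v |\deg_G(v)-\deg_H(v)|$ has the same parity as $2\,|E(G)\triangle E(H)|$, hence is even), whereas a single-edge symmetric difference would force this sum to equal exactly $2$ with the defect concentrated so that it cannot vanish — but since $\deg_G = \deg_H$ everywhere the sum is $0$, forcing $E(G) \triangle E(H)$ to have even size $\neq$ the odd value $2$ implied by $|E(G)-E(H)| = |E(H)-E(G)| = 1$. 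Either route works; I would present the explicit degree-at-$b$ argument for clarity.
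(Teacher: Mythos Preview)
Your main argument is correct and follows essentially the same idea as the paper: from $|E(G)|=|E(H)|$ deduce $|E(H)-E(G)|=1$, then find a vertex whose degree must differ. The paper compresses your two cases into one observation (the single edge in $E(H)-E(G)$ would have to be incident to both $a$ and $b$, hence equal to $ab$ itself), but the content is the same. One small note: your aside about the ``slicker'' parity phrasing is garbled --- the symmetric difference here has size $2$, which is even, not odd --- so that alternative route as written does not actually go through; stick with the explicit degree-at-$b$ argument you said you would present.
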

\begin{proof}
We proceed by contradiction.
	Assume that \(| E(G) - E(H)|= 1\). Notice that $G$ and $H$ having the same degree sequence implies that $|E(G)|=|E(H)|$. Thus, $| E(H) - E(G)|= 1$.

Let $ab$ be the only edge in $E(G)-E(H)$.	
	As $G$ and $H$ have the same degree  sequence, $\deg_G(a)=\deg_H a$ and $\deg_G(b)=\deg_H(b)$. Hence there must be an edge incident to $a$ and 
	an edge incident to $b$ in $E(H)-E(G)$. 
But as 	$| E(H) - E(G)|= 1$, this can only be possible if $ab$ is an edge of $H$, contradicting the fact that
$ab$ is the only edge in $E(G)-E(H)$.
\end{proof}

Suppose now that $(\tau_{i})_{i=1^{r}}$ is the f-switch sequence obtained as output of Transition Algorithm applied to two forests \(F\) and \(F'\). Let $F_0=F$ and $F_i=\tau_i(F_{i-1})$ for $1\leq i \leq r$.
Notice that for each $i\in [r]$,  $|E(F')\cap E(F_i)|\geq |E(F')\cap E(F_{i-1})|+1$, thus
\[
\sum_{i=1}^{r-1}\left( |E(F')\cap E(F_i)|-|E(F')\cap E(F_{i-1})| \right) \geq r.
\]
Since this last sum is telescoping we have  
\[
|E(F')\cap E(F_{r-1})|-|E(F')\cap E(F)|\geq r-1.
\]
On the other hand, Lemma \ref{etaneq1} implies
\begin{align*}
|E(F')\cap E(F_{r})|-|E(F')\cap E(F_{r-1})|=|E(F')|-|E(F')\cap E(F_{r-1})|\geq 2.
\end{align*}
Thus,
\begin{align*}
|E(F')\cap E(F_{r})|-|E(F')\cap E(F)|\geq r+1.
\end{align*}
But $E(F_{r})=E(F')$, and $|E(F')|-|E(F')\cap E(F)|=|E(F')-E(F)|$.
Therefore 
\begin{align*}
r\leq |E(F')-E(F)|-1.
\end{align*}
Hence the Transition Algorithm runs at most $|E(F')-E(F)|-1$ times.

The previous discussion together with Theorem \ref{IPTT} give us the main result of the section: 
\begin{theorem}[Forest Transition Theorem]
	\label{transbosques}
		Let $s$ be a graphical sequence and let $F$ and $F'$ two forests in $\mathcal{F}(s)$. Then \(F\) can be transformed into \(F'\) with at most $|E(F')-E(F)|-1$ f-switches given by the Transition Algorithm.
\end{theorem}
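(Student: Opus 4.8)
The plan is to combine the existence statement Theorem~\ref{IPTT} with an edge-counting analysis of the sequence produced by the Transition Algorithm, following the bookkeeping outlined in the paragraph preceding the statement. First I would note that if $F=F'$ the claim is vacuous, so assume $F\neq F'$; since $F$ and $F'$ have the same number of edges, Lemma~\ref{etaneq1} then gives $|E(F')-E(F)|\geq 2$, so the asserted bound is a genuine nonnegative number. By Theorem~\ref{IPTT} (equivalently, by the correctness of the Transition Algorithm, whose steps are exactly the induction in that proof) the algorithm halts and returns a sequence of f-switches $(\tau_i)_{i=1}^{r}$; setting $F_0=F$ and $F_i=\tau_i(F_{i-1})$ we have $F_r=F'$. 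The task reduces to showing $r\leq |E(F')-E(F)|-1$.

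Second, I would introduce the monovariant $c_i:=|E(F')\cap E(F_i)|$, the number of edges $F_i$ already shares with the target, and prove $c_i\geq c_{i-1}+1$ for $1\leq i\leq r-1$. A switch $\tau_i$ is performed by the algorithm only when the current forest shares no trimmable leaf with $F'$, and Lemma~\ref{ponerhojascompartidas} produces $\tau_i$ of the form $\tau_i={{\ell\ v}\choose{u\ w}}$, where $u$ is the $F'$-neighbour of the leaf $\ell$, $v$ is its neighbour in $F_{i-1}$, and $w$ is a neighbour of $u$ in $F_{i-1}$; after the switch $\ell u\in E(\tau_i(F_{i-1}))\cap E(F')$, and since $\ell$ was a non-trimmable leaf of $F_{i-1}$ the edge $\ell u$ is new to the intersection. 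The two edges deleted by $\tau_i$ are $\ell v$, which lies outside $E(F')$ because $\ell$'s only $F'$-neighbour is $u\neq v$, and $uw$; the crux is to choose $w$ so that $uw\notin E(F')$, using that $u$ has equal degree in $F_{i-1}$ and $F'$ while already losing the neighbour $\ell$ on passing to $F'$, so $u$ has a neighbour in $F_{i-1}$ that is not a neighbour of $u$ in $F'$. Here lies the main obstacle: one must check, across the sub-cases of Lemma~\ref{ponerhojascompartidas} (leaf and target-neighbour in different components, in the same component, or all degrees equal to $1$), that the freedom in selecting the leaf $\ell$ and the vertex $w$ is enough to pick $w$ both admissible for the f-switch and outside $N_{F'}(u)$; once this is done, $\tau_i$ destroys no edge of the intersection and creates at least one, so $c_i\geq c_{i-1}+1$.

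Third, I would telescope. Summing $c_i-c_{i-1}\geq 1$ over $i=1,\dots,r-1$ yields $c_{r-1}-c_0\geq r-1$. For the final switch, $F_{r-1}\neq F_r=F'$ gives $|E(F')-E(F_{r-1})|\geq 1$, which Lemma~\ref{etaneq1} improves to $|E(F')-E(F_{r-1})|\geq 2$; since $c_r=|E(F')|$ this says $c_r-c_{r-1}\geq 2$. Adding the two estimates, $c_r-c_0\geq r+1$, and as $c_r-c_0=|E(F')|-|E(F')\cap E(F)|=|E(F')-E(F)|$ we obtain $r\leq |E(F')-E(F)|-1$. Together with Theorem~\ref{IPTT} and Lemma~\ref{tauiL} (which guarantees every $F_i$ is a forest), this shows the Transition Algorithm transforms $F$ into $F'$ using at most $|E(F')-E(F)|-1$ f-switches, as claimed.
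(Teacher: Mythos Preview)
Your plan is exactly the paper's: set $c_i=|E(F')\cap E(F_i)|$, claim $c_i\ge c_{i-1}+1$ for each f-switch produced by the algorithm, sharpen the final increment to at least $2$ via Lemma~\ref{etaneq1}, and telescope. The paper asserts the key inequality with a bare ``Notice that\ldots''; you go further and correctly isolate what must be checked --- that the deleted edge $uw$ does not lie in $E(F')$ --- and you are right to be uneasy about whether a $w$ compatible with Lemma~\ref{ponerhojascompartidas} can always be chosen with $uw\notin E(F')$.

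The obstacle you flag is genuine and is not resolved by your sketch. Let $F$ be the six-vertex path with edge set $\{\ell v,\,vp,\,pu,\,us,\,st\}$ and $F'$ the path with edge set $\{\ell u,\,us,\,sp,\,pv,\,vt\}$: same degree sequence, no trimmable leaves, and $|E(F')-E(F)|=3$. If step~(ii) of the algorithm selects the leaf $\ell$ (a valid choice, since its $F'$-neighbour $u$ has degree $2$), then the unique off-path $F$-neighbour of $u$ is $s$, and $us\in E(F')$; the forced f-switch $\tau_1={{\ell\ v}\choose{u\ s}}$ yields $c_1=c_0=2$, and from $\tau_1(F)$ one checks that two further f-switches are still required, so this run of the Transition Algorithm uses $r=3>|E(F')-E(F)|-1=2$. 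Thus the remedy ``choose $w\notin N_{F'}(u)$'' can be unavailable once $\ell$ is fixed, the inequality $c_i\ge c_{i-1}+1$ can fail, and indeed the stated bound fails for this execution. The paper's argument has the very same gap; what would actually be needed --- and is supplied neither by the paper nor by your outline --- is a rule for selecting the \emph{leaf} $\ell$ in step~(ii), not merely $w$, that forces the increment, together with a proof that such a leaf is always available.
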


It is important to note that even though the bound in Theorem \ref{transbosques} is a priori worse than the result in \cite{BeregIto}, the sequence of 2-switches that
they construct is not necessarily a sequence of f-switches. 
\section{Stability and interval property}\label{Stability_intevalProperty}\label{sectionIntervalproperty}

In this section we apply the transition theorems to study how the 2-switch operations perturbs different parameters on graphs.
We introduce the notion of stability.

\begin{definition}
	A graph parameter \(\xi\) is said to be \textbf{stable} under $2$-switch, if given \(G\) a graph and \(\tau\) a $2$-switch, then
	\[
	\left| \xi\left(\tau(G)\right)-\xi(G)\right| \leq 1.
	\]
\end{definition}

The next result provides an easy way to show that a parameter is stable under $2$-switch.
\begin{lemma}\label{lemastable}
Let $\xi$ be an integer parameter. The following hold
\begin{enumerate}
\item if $\xi(\tau(G))\leq \xi(G)+1$ for every graph $G$ and every $2$-switch $\tau$, then $\xi$ is stable under $2$-switch; 
\item if $\xi(\tau(G))\geq \xi(G)-1$ for every graph $G$ and every $2$-switch $\tau$, then $\xi$ is stable under $2$-switch.
\end{enumerate}
\end{lemma}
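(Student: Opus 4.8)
The plan is to prove each of the two implications by exploiting the fact that a 2-switch is its own "inverse" up to transposition of the action matrix, as recorded in Lemma \ref{switchelemprop}(4). The key observation is that the relation "$H = \tau(G)$ for some nontrivial 2-switch $\tau$" is symmetric: if $H$ is obtained from $G$ by a 2-switch, then $G$ is obtained from $H$ by a 2-switch (namely $\tau_{A^t}$). For trivial 2-switches there is nothing to prove since then $\tau(G) = G$ and the displayed inequality reads $0 \le 1$.

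For part (1), I would argue as follows. Fix a graph $G$ and a 2-switch $\tau$, and let $H = \tau(G)$. If $\tau$ is trivial for $G$, then $\xi(\tau(G)) - \xi(G) = 0$ and we are done, so assume $\tau = \tau_A$ is nontrivial in $G$ with action matrix $A$. Applying the hypothesis to $G$ and $\tau$ gives $\xi(H) = \xi(\tau(G)) \le \xi(G) + 1$. For the other direction, by Lemma \ref{switchelemprop}(4) the 2-switch $\tau_{A^t}$ satisfies $\tau_{A^t}(H) = \tau_{A^t}\tau_A(G) = G$; applying the hypothesis now to the graph $H$ and the 2-switch $\tau_{A^t}$ yields $\xi(G) = \xi(\tau_{A^t}(H)) \le \xi(H) + 1$, i.e. $\xi(H) \ge \xi(G) - 1$. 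Combining the two inequalities gives $|\xi(\tau(G)) - \xi(G)| = |\xi(H) - \xi(G)| \le 1$, so $\xi$ is stable under 2-switch.

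Part (2) is entirely symmetric: assuming $\xi(\tau(G)) \ge \xi(G) - 1$ for every graph and every 2-switch, one again reduces to the nontrivial case, applies the hypothesis to $(G,\tau_A)$ to get $\xi(H) \ge \xi(G) - 1$, and applies it to $(H, \tau_{A^t})$ to get $\xi(G) = \xi(\tau_{A^t}(H)) \ge \xi(H) - 1$, i.e. $\xi(H) \le \xi(G) + 1$; together these bound $|\xi(H) - \xi(G)|$ by $1$. Alternatively one can simply note that part (2) follows from part (1) applied to the parameter $-\xi$, since $-\xi$ is also integer-valued and the hypothesis of (2) for $\xi$ is exactly the hypothesis of (1) for $-\xi$; however I would probably spell out the symmetric argument directly to keep the proof self-contained and to avoid worrying about whether "parameter" is meant to allow negative values.

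There is essentially no obstacle here — the only thing one must be careful about is the handling of trivial 2-switches (so that the statement is not vacuously restricted to nontrivial ones) and the correct invocation of Lemma \ref{switchelemprop}(4), which requires $A$ to be interchangeable in $G$; this is exactly the condition that $\tau_A$ is nontrivial in $G$, which is the case we are in. So the proof is short and the "hard part" is merely bookkeeping: making sure the hypothesis is applied once to $G$ and once to $\tau(G)$ with the inverse switch.
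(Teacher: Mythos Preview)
Your proof is correct and follows essentially the same approach as the paper: fix $G$ and $\tau$, apply the hypothesis once to $(G,\tau)$ and once to $(\tau(G),\tau^{-1})$ using Lemma~\ref{switchelemprop}(4) to obtain both inequalities, then leave part~(2) as the symmetric case. If anything, your version is slightly more careful than the paper's, since you explicitly separate the trivial case (where $\tau(G)=G$) before invoking the inverse 2-switch, which indeed requires $A$ to be interchangeable in $G$.
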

\begin{proof}
In order to prove the first implication, assume \begin{equation}\label{eqlemastable}
\xi(\tau(G))\leq \xi(G)+1
\end{equation} 
for every graph $G$ and every $2$-switch $\tau$.

Fix $G$  and $\tau$. We have then $\xi(\tau(G))-\tau(G)\leq 1$.
On the other hand by Lemma \ref{switchelemprop}, $\tau$ has 
an inverse $2$-switch $\tau^{-1}$. 
Applying Inequality (\ref{eqlemastable}) to $\tau^{-1}$ and $\tau(G)$ yields
\[
\xi(G)=\xi(\tau^{-1}(\tau(G)))\leq \xi(\tau(G))+1.
\]
Hence,
\[
-1\leq \xi(\tau(G))-\xi(G).
\]
Therefore,
\[
\left| \xi\left(\tau(G)\right)-\xi(G)\right| \leq 1,
\] 
and $\xi$ is stable.

The proof of the second implication is similar and is left to the reader.
\end{proof}

As claimed in the introduction, we are interested in finding which values a parameter $\xi$ realizes in $\mathcal{F}(s)$ and $\mathcal{G}(s)$ . In order to do that, we need to introduce the concept of interval property. One could think of this property as a discrete analogous of the Intermediate Value Theorem from elementary Calculus.
\begin{definition}
	Let $\mathcal{X}$ be a collection of graphs and let $\xi:\mathcal{X}\longrightarrow \mathbb{R}$ be a parameter defined on $\mathcal{X}$. We say that $\xi$ has the \textbf{interval property} on $\mathcal{X}$ if $\xi(\mathcal{X})=I\cap \mathbb{Z}$, for some interval $I\subset \mathbb{R}$. 
\end{definition}
In other words, if $\xi_{\max}$ and $\xi_{\min}$ are the maximum and minimum values obtain by $\xi(G)$ with $G\in \mathcal{X}$, then $\xi$ has the interval property on $\mathcal{X}$ if for every integer $k$ such that $\xi_{\min}\leq k\leq \xi_{\max}$ there is a $G\in\mathcal{X}$) such that $\xi(G)=k$.

The next theorem shows that stability implies interval property for integer parameters. This idea was used in \cite{BockRat}
to prove the interval property for the matching number on the family of bipartite graphs with a given bipartite degree sequence, but here we formalize it in order to apply it to many parameters.

\begin{theorem} \label{printparamgeneral}
	Let \(s\) be a graphical sequence and \(\xi\)  an integer parameter. If \(\xi\) is stable under 2-switch, then \(\xi\) has the interval property on \(\mathcal{G}(s)\) and on \(\mathcal{F}(s)\) . 
\end{theorem}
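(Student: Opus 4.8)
The plan is to use the two transition theorems together with stability in a completely parallel way for the two families. Fix a graphical sequence $s$ and a stable integer parameter $\xi$. For the family $\mathcal{G}(s)$, pick graphs $G_{\min}, G_{\max} \in \mathcal{G}(s)$ attaining $\xi_{\min}$ and $\xi_{\max}$ respectively (these exist because $\mathcal{G}(s)$ is finite, being a set of labeled graphs on $[n]$). Let $k$ be any integer with $\xi_{\min} \le k \le \xi_{\max}$; I must produce a graph in $\mathcal{G}(s)$ with $\xi$-value exactly $k$.

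The key step is to invoke Theorem \ref{BergeTeo} to get a $2$-switch sequence $\tau_1, \dots, \tau_r$ transforming $G_{\min}$ into $G_{\max}$, and set $G_0 = G_{\min}$, $G_i = \tau_i(G_{i-1})$, so $G_r = G_{\max}$. Stability of $\xi$ (Definition of stable) gives $|\xi(G_i) - \xi(G_{i-1})| \le 1$ for each $i$, i.e.\ consecutive terms of the integer sequence $\xi(G_0), \xi(G_1), \dots, \xi(G_r)$ differ by at most $1$. Now I would apply a discrete intermediate value argument: an integer-valued sequence whose consecutive terms differ by at most $1$, starting at $\xi_{\min} \le k$ and ending at $\xi_{\max} \ge k$, must hit the value $k$ at some index $j$. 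Concretely, let $j$ be the smallest index with $\xi(G_j) \ge k$; such $j$ exists since $\xi(G_r) = \xi_{\max} \ge k$. If $j = 0$ then $\xi(G_0) = \xi_{\min} \ge k$, and combined with $k \ge \xi_{\min}$ this forces $\xi(G_0) = k$. If $j \ge 1$, minimality gives $\xi(G_{j-1}) < k \le \xi(G_j)$, so $\xi(G_{j-1}) \le k-1$ and $\xi(G_j) \ge k$; since these differ by at most $1$ we get $\xi(G_{j-1}) = k-1$ and $\xi(G_j) = k$. In all cases some $G_j \in \mathcal{G}(s)$ has $\xi(G_j) = k$, establishing $\xi(\mathcal{G}(s)) = [\xi_{\min}, \xi_{\max}] \cap \mathbb{Z}$, which is the interval property.

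For $\mathcal{F}(s)$ the argument is verbatim the same, replacing $\mathcal{G}(s)$ by $\mathcal{F}(s)$, choosing $F_{\min}, F_{\max} \in \mathcal{F}(s)$ attaining the extreme values (again $\mathcal{F}(s)$ is finite), and invoking the Forest Transition Theorem (Theorem \ref{transbosques}, or just Theorem \ref{IPTT}) instead of Theorem \ref{BergeTeo} to obtain the transforming sequence; the crucial point is that every intermediate graph $F_i$ stays in $\mathcal{F}(s)$ because each $\tau_i$ is an f-switch, so stability may legitimately be applied along the whole sequence and every intermediate $\xi$-value is a value realized \emph{within the family}. One should also dispatch the degenerate case $\xi_{\min} = \xi_{\max}$ (or an empty family), which is trivial. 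I do not anticipate a genuine obstacle here: the only thing to be careful about is that stability is a statement about arbitrary $2$-switches on arbitrary graphs, so it applies to the $\mathcal{G}(s)$ sequence directly, while for $\mathcal{F}(s)$ we additionally need the transition sequence to consist of f-switches so that the intermediate graphs are themselves witnesses in $\mathcal{F}(s)$ — and that is exactly what Theorem \ref{IPTT} guarantees. If anything counts as the main point, it is recognizing that the finiteness of the labeled families makes the extremal graphs available and that the f-switch refinement of Berge's theorem is what makes the forest case go through with no extra work.
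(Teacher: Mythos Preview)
Your proof is correct and follows essentially the same approach as the paper: pick extremal graphs in the family, connect them by a $2$-switch sequence (via Theorem~\ref{BergeTeo} for $\mathcal{G}(s)$ and the Forest Transition Theorem for $\mathcal{F}(s)$), and use stability to conclude that every intermediate integer value is realized. You have simply spelled out the discrete intermediate-value step and the finiteness of the families more explicitly than the paper does.
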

\begin{proof}
Let $\mathcal{X}\in\{\mathcal{G}(s),\mathcal{F}(s)\}$.
Consider two graphs $G_{1},G_{2}\in \mathcal{X}$ such that $\xi(G_{1})$ and $\xi(G_{2})$ are respectively the minimum and the maximum value for $\xi$ on $\mathcal{X}$. If $\mathcal{X}=\mathcal{G}(s)$, by Theorem \ref{BergeTeo}, there exists a 2-switch sequence $(\tau_{i})$ transforming $G_{1}$ into $G_{2}$. 
Otherwise, if $\mathcal{X}=\mathcal{F}(s)$, by Theorem \ref{transbosques} there is an f-switch sequence transforming $G_{1}$ into $G_{2}$.
In either case, since each $\tau_{i}$ perturbs $\xi$ at most by 1, every integer value in the interval $[\xi(G_{1}),\xi(G_{2})]$ must be attained in some graph of the transition. This means that $\xi$ has the interval property on $\mathcal{X}$.
\end{proof}

Note that Theorem \ref{printparamgeneral} gives us a way to obtain a forest with any (allowed) value of \(\xi\).  Assume that $\xi$ is stable under 2-switch. Given two forests $F_{1},F_{2}\in \mathcal{F}(s)$ such that $\xi_{1}:=\xi(F_{1})$ and $\xi_{2}:=\xi(F_{2})$ and $\xi_{1}\leq\xi_{2}$. Fix an integer $k\in [\xi_{1},\xi_{2}]$. Then, apply the Transition Algorithm from $F_{1}$ to $F_{2}$ to obtain a forest $F$ such that $\xi(F)=k$.

We use Theorem \ref{printparamgeneral} to study the stability and the interval property of several integer  parameters.
%%%%%%%%%%%%%%%%%%%%%%%%%%%%%%%%%%%%%%%%%%%%%%%%%%%%%%%%%%%%%%%%%%%%%%%%%%%%%%%%%%%%%%%%%%%%%%%%%%%%%%%%%%%%%%%%%%%%%%%%%%%%%

\subsection{Matching number and related parameters}\label{sectionmatchingnumber}

A matching in a graph $G$ is a set of pairwise disjoint edges of $G$. The maximum cardinality of a matching in $G$ is called the matching number of $G$, which is denoted by $\mu(G)$. A matching in \(G\) with maximum cardinality is called a maximum matching. A proof of the stability of $\mu$ under 2-switch can be found between the lines of \cite{BockRat}.
We include our proof for completion.
%Aca pueden haber vertices aislados, por lo que no conviene identiciar los grafos con sus lados
\begin{lemma}
	\label{matchlem1}
	Let $M$ be a maximum matching in a graph $G$ and let $\tau$ be a 2-switch between $e_{1},e_{2}\in E(G)$. If $e_{1}$ and $e_{2}$ are both in $M$ or both in $G-M$, then $\mu(G)\leq \mu(\tau (G))$.
\end{lemma}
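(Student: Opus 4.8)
The plan is to exhibit a matching in $\tau(G)$ of size at least $|M|=\mu(G)$. Write $\tau = {{a\ b}\choose{c\ d}}$, so that $e_1 = ab$ and $e_2 = cd$ are deleted while $e_1' = ac$ and $e_2' = bd$ are added, and $G$ and $\tau(G)$ agree on all other edges. I would split into the two cases given in the hypothesis.

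First suppose $e_1, e_2 \in G - M$. Then $M$ is a set of edges none of which is $ab$ or $cd$, so $M \subseteq E(G) \cap E(\tau(G))$, and $M$ is still a matching in $\tau(G)$ (being a matching is a property of the edge set alone, and we have not touched any edge of $M$). Hence $\mu(\tau(G)) \geq |M| = \mu(G)$.

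Now suppose $e_1 = ab \in M$ and $e_2 = cd \in M$. Consider $M' = (M \setminus \{ab, cd\}) \cup \{ac, bd\}$. Every edge in $M \setminus \{ab,cd\}$ survives in $\tau(G)$, and $ac, bd \in E(\tau(G))$ by construction, so $M' \subseteq E(\tau(G))$. It remains to check $M'$ is a matching, i.e. its edges are pairwise disjoint. The two new edges $ac$ and $bd$ are disjoint since $a,b,c,d$ are four distinct vertices (condition 2 of interchangeability). An edge $f \in M \setminus \{ab, cd\}$ is disjoint from $ab$ and from $cd$ because $M$ was a matching; since $\{a,c\}\cup\{b,d\} = \{a,b\}\cup\{c,d\}$, the edge $f$ is also disjoint from $ac$ and $bd$. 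Thus $M'$ is a matching in $\tau(G)$ with $|M'| = |M|$, giving $\mu(\tau(G)) \geq |M| = \mu(G)$.

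I do not expect a serious obstacle here; the only thing to be careful about is the bookkeeping of which edges are present in $\tau(G)$ and the observation that the vertex set $\{a,b,c,d\}$ is preserved by the switch, so disjointness from the old pair of edges transfers to disjointness from the new pair. One could also phrase both cases uniformly by noting $M$ either contains none of $\{ab,cd\}$ or both, and in each case the indicated modification of $M$ lands inside $E(\tau(G))$ and stays a matching.
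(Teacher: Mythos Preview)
Your proof is correct and follows essentially the same approach as the paper: in the first case $M$ survives intact as a matching in $\tau(G)$, and in the second case you replace $\{ab,cd\}$ by $\{ac,bd\}$ and verify the result is still a matching of the same size. The paper's argument is identical in structure, only slightly terser in checking disjointness.
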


\begin{proof}
If $e_{1},e_{2}\in E(G)-M$, then $M$ is also a matching in $\tau (G)$.  Hence, $|M|=\mu(G)\leq \mu(\tau (G))$. 

If $e_{1},e_{2}\in M$, the set $M'=M-\{e_{1},e_{2}\}$ is a matching of size $\mu(G)-2$ in $\tau (G)=(G-\{e_{1},e_{2}\})\cup \{e'_{1},e'_{2}\}$, where \(e'_{1}\) and \(e'_{2}\) are the edges in \(G^{c}\) that \(\tau\)  add to \(G\). Notice that none of the four vertices involved in $\tau$ belong to some edge of $M'$. Hence, \(M'\cup \{e'_{1},e'_{2}\}\) is a matching of \(\tau (G)\).  Therefore, $\mu(G')\geq |M'\cup \{e'_{1},e'_{2}\}|=(\mu(G)-2)+2=\mu(G)$.% $\square$
\end{proof}

\begin{lemma}
	\label{matchlem2}
	 Let $M$ be a maximum matching in a graph $G$, and $\tau$ be a 2-switch between $e_{1},e_{2}\in G$. If $e_{1}\in M$ and $e_{2}\notin M$, then $\mu(\tau (G))\geq \mu(G)-1$.
\end{lemma}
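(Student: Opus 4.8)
The plan is to exhibit, directly, a matching in $\tau(G)$ of size $\mu(G)-1$. Write $e_{1}=ab$ and $e_{2}=cd$, and let $e'_{1},e'_{2}$ be the two edges that $\tau$ adds to $G$, so that $E(\tau(G))=\bigl(E(G)-\{e_{1},e_{2}\}\bigr)\cup\{e'_{1},e'_{2}\}$. Since $M$ is a matching containing $e_{1}$ but not $e_{2}$, the set $M'=M-\{e_{1}\}$ is a matching of $G$ with $|M'|=\mu(G)-1$, and by construction $M'$ contains neither $e_{1}$ nor $e_{2}$.

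The key observation is then that $M'\subseteq E(G)-\{e_{1},e_{2}\}\subseteq E(\tau(G))$, because $\tau$ removes only $e_{1}$ and $e_{2}$ from the edge set and every other edge of $G$ (in particular every edge of $M'$) is still present in $\tau(G)$. Being a subset of $M$, the set $M'$ is still a set of pairwise disjoint edges, hence $M'$ is a matching in $\tau(G)$. Consequently $\mu(\tau(G))\geq|M'|=\mu(G)-1$, which is exactly the claimed bound.

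There is essentially no obstacle here: unlike Lemma \ref{matchlem1}, where one had to build a larger matching by re-using the newly added edges, here it suffices to \emph{discard} the one matching edge ($e_{1}$) that the $2$-switch destroys and note that nothing else in the matching is affected. The only point worth stating carefully is that $e_{2}\notin M$ guarantees $e_{2}\notin M'$, so that the $2$-switch touches none of the edges of $M'$; this is what makes $M'$ survive verbatim in $\tau(G)$.
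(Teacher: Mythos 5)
Your proposal is correct and follows exactly the paper's argument: the paper also observes that $M-e_{1}$ remains a matching of size $\mu(G)-1$ in $\tau(G)$, giving $\mu(\tau(G))\geq \mu(G)-1$. You simply spell out in more detail why $M-\{e_{1}\}$ is untouched by the $2$-switch (namely $e_{2}\notin M$), which the paper leaves implicit.
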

\begin{proof}
The set $M-e_{1}$ is a matching in $\tau(G)$ of size $\mu(G)-1$. Thus, $\mu(\tau(G))\geq \mu(G)-1$. %$\square$ \\
\end{proof}

Now we can apply Lemmas \ref{lemastable}, \ref{matchlem1} and \ref{matchlem2} to obtain the following.
\begin{theorem}
	\label{matchlem3}
	The matching number is stable under 2-switch.
\end{theorem}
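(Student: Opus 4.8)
The plan is to combine the two preceding lemmas with Lemma \ref{lemastable}. Given an arbitrary graph $G$ and an arbitrary $2$-switch $\tau$ between edges $e_1,e_2\in E(G)$ (if $\tau$ is trivial for $G$ then $\tau(G)=G$ and there is nothing to prove), I would fix a maximum matching $M$ of $G$ and split into cases according to how $e_1$ and $e_2$ meet $M$. If $e_1,e_2$ are both in $M$ or both in $E(G)-M$, Lemma \ref{matchlem1} gives $\mu(G)\leq\mu(\tau(G))$. If exactly one of them lies in $M$, say $e_1\in M$ and $e_2\notin M$ (the other subcase being symmetric, since $\tau$ being a $2$-switch between $e_1$ and $e_2$ does not distinguish their order as edges), Lemma \ref{matchlem2} gives $\mu(\tau(G))\geq\mu(G)-1$. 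In every case we obtain $\mu(\tau(G))\geq\mu(G)-1$.

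Thus the hypothesis of the second part of Lemma \ref{lemastable} is verified: $\xi(\tau(G))\geq\xi(G)-1$ for $\xi=\mu$, for every graph $G$ and every $2$-switch $\tau$. Applying Lemma \ref{lemastable} immediately yields that $\mu$ is stable under $2$-switch, which is the claim.

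The only mild subtlety — and the one place I would be slightly careful — is the bookkeeping in the mixed case: one must make sure that the roles of $e_1$ and $e_2$ can indeed be swapped so that Lemma \ref{matchlem2} applies. This is immediate because a $2$-switch between $e_1$ and $e_2$ is the same object as a $2$-switch between $e_2$ and $e_1$ (only the roles of the two rows of the action matrix are exchanged, which does not affect nontriviality), so ``exactly one of $e_1,e_2$ is in $M$'' always reduces to the hypothesis of Lemma \ref{matchlem2}. Beyond that, there is no real obstacle here: the theorem is a direct corollary of Lemmas \ref{lemastable}, \ref{matchlem1}, and \ref{matchlem2}, and the proof is essentially a one-line case check followed by an invocation of Lemma \ref{lemastable}.
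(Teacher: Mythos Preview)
Your proposal is correct and follows essentially the same approach as the paper: combine Lemmas \ref{matchlem1} and \ref{matchlem2} to obtain $\mu(\tau(G))\geq\mu(G)-1$ in all cases, then invoke part (2) of Lemma \ref{lemastable}. The paper's proof is just the one-line version of your argument, and your extra remark about swapping the roles of $e_1$ and $e_2$ is a harmless elaboration.
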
  

\begin{proof}
Let $G$ be a graph and $\tau$ be a 2-switch. By Lemmas \ref{matchlem1} and \ref{matchlem2}, 
$\mu(\tau(G))\geq \mu(G)-1$. Thus, by Lemma \ref{lemastable} $\mu$ is stable under $2$-switch.
\end{proof}

Theorem \ref{matchlem3} together with Theorem \ref{printparamgeneral} yields the following result. 

\begin{corollary}
	 Let \(s\) be a graphical sequence. The matching number has the interval property on $\mathcal{G}(s)$ and on $\mathcal{F}(s)$.
\end{corollary}

Let $G$ be a graph. An edge cover of $G$ is a set of edges $\mathcal{C}$ such that every vertex of $G$ is incident to at least one edge of $\mathcal{C}$. A minimum edge cover of $G$ is an edge cover of $G$ of minimum cardinality. The edge-covering number of $G$, denoted by $\epsilon(G)$, is the cardinality of a minimum edge cover of $G$. It is known that $\epsilon(G)=n-\mu(G)$, where $n$ is the order of $G$ (see \cite{Gallai}). The next corollaries follow easily.

\begin{corollary}
	\label{edgecoverprint}
	The edge-covering number is stable under 2-switch.
\end{corollary}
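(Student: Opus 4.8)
The plan is to derive the stability of the edge-covering number $\epsilon$ directly from the identity $\epsilon(G) = n - \mu(G)$ together with the already-established stability of the matching number $\mu$ (Theorem \ref{matchlem3}). The key observation is that a $2$-switch does not change the order of the graph: if $G \in \mathcal{G}_n$ and $\tau$ is any $2$-switch, then $\tau(G) \in \mathcal{G}_n$ as well, since $2$-switches only delete and add edges, never vertices. Thus the additive constant $n$ in the Gallai identity is the same for $G$ and for $\tau(G)$, and the perturbation in $\epsilon$ is controlled entirely by the perturbation in $\mu$.

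First I would fix an arbitrary graph $G$ of order $n$ and an arbitrary $2$-switch $\tau$. Then $\tau(G)$ also has order $n$, so applying $\epsilon = n - \mu$ to both graphs gives
\[
\epsilon(\tau(G)) - \epsilon(G) = \bigl(n - \mu(\tau(G))\bigr) - \bigl(n - \mu(G)\bigr) = \mu(G) - \mu(\tau(G)).
\]
Taking absolute values yields $\left|\epsilon(\tau(G)) - \epsilon(G)\right| = \left|\mu(\tau(G)) - \mu(G)\right| \leq 1$, where the last inequality is exactly Theorem \ref{matchlem3}. Since $G$ and $\tau$ were arbitrary, $\epsilon$ is stable under $2$-switch.

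There is essentially no obstacle here; the only point that needs a word of care is the one flagged above, namely that $2$-switches preserve the order of the graph, which is immediate from Definition \ref{def2switch} (whether $A$ is interchangeable or trivial, the vertex set is untouched). Everything else is the arithmetic of the Gallai identity, which may be quoted from \cite{Gallai}. If desired, one could alternatively route the argument through Lemma \ref{lemastable} by checking the one-sided bound $\epsilon(\tau(G)) \leq \epsilon(G) + 1$, but the two-sided computation above is shorter and self-contained given Theorem \ref{matchlem3}.
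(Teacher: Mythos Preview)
Your proof is correct and follows precisely the approach the paper intends: the corollary is stated immediately after the Gallai identity $\epsilon(G)=n-\mu(G)$ and Theorem~\ref{matchlem3}, with the remark that it ``follows easily,'' and your computation $\epsilon(\tau(G))-\epsilon(G)=\mu(G)-\mu(\tau(G))$ is exactly the intended one-line derivation.
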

\begin{corollary}
	Let \(s\) be a graphical sequence. The edge-covering number has the interval property on $\mathcal{G}(s)$ and on $\mathcal{F}(s)$.
\end{corollary}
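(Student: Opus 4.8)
The statement to prove is that the edge-covering number $\epsilon$ has the interval property on $\mathcal{G}(s)$ and on $\mathcal{F}(s)$.

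\medskip

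The plan is to derive this directly from the two facts already established: that $\epsilon$ is stable under $2$-switch (Corollary~\ref{edgecoverprint}) and that stability implies the interval property (Theorem~\ref{printparamgeneral}). Since Theorem~\ref{printparamgeneral} is stated for integer parameters, the only thing I would check first is that $\epsilon$ is indeed integer-valued, which is immediate from its definition as the cardinality of a minimum edge cover. Then the result follows in one line: $\epsilon$ is a stable integer parameter, so by Theorem~\ref{printparamgeneral} it has the interval property on both $\mathcal{G}(s)$ and $\mathcal{F}(s)$.

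\medskip

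Alternatively, if one prefers not to invoke stability as a black box, I would argue from the identity $\epsilon(G)=n-\mu(G)$. For a fixed graphical sequence $s$ all graphs in $\mathcal{G}(s)$ (resp. $\mathcal{F}(s)$) have the same order $n$, so $\epsilon(\mathcal{X})=\{n-k : k\in\mu(\mathcal{X})\}$ for $\mathcal{X}\in\{\mathcal{G}(s),\mathcal{F}(s)\}$. Since the matching number has the interval property on $\mathcal{X}$ (the preceding corollary), $\mu(\mathcal{X})=[\mu_{\min},\mu_{\max}]\cap\mathbb{Z}$ for suitable integers, and an affine map $k\mapsto n-k$ with integer slope $-1$ sends an interval of consecutive integers to an interval of consecutive integers. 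Hence $\epsilon(\mathcal{X})=[n-\mu_{\max},n-\mu_{\min}]\cap\mathbb{Z}$, which is exactly the interval property.

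\medskip

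I do not anticipate any real obstacle here: this corollary is a purely formal consequence of results proved immediately before it. The only point requiring a moment's care is the observation that $n$ is constant across the family $\mathcal{X}$ — but this is built into the definitions of $\mathcal{G}(s)$ and $\mathcal{F}(s)$, since a degree sequence of length $n$ forces every graph in the family to have exactly $n$ vertices. Everything else is bookkeeping, so I would keep the write-up to a couple of sentences, most likely just citing Corollary~\ref{edgecoverprint} and Theorem~\ref{printparamgeneral}.
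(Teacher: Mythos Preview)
Your proposal is correct and matches the paper's approach: the paper does not give an explicit proof of this corollary, simply noting that it ``follows easily'' from the Gallai identity $\epsilon(G)=n-\mu(G)$ together with the results just established for the matching number; your first argument (Corollary~\ref{edgecoverprint} plus Theorem~\ref{printparamgeneral}) and your alternative via the affine map $k\mapsto n-k$ both reconstruct exactly this.
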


The rank and nullity of a graph are the rank and nullity of its adjacency matrix. It is known that $\rank(F)=2\mu(F)$, for any forest $F$ (see \cite{Bevis,Gutman}). Combining this fact with the Rank-nullity theorem of linear algebra and the interval property of the matching number on $\mathcal{F}(s)$, we get the following results.

\begin{corollary}\label{cororanknull}
	Let \(s\) be a graphical sequence. Let $F\in \mathcal{F}(s)$ be a forest and \(\tau\) an f-switch over \(F\). Then \[|\rank(\tau(F))-\rank(F)|=|\mnull(\tau(F))-\mnull(F)|\in \{0,2\}.\]
\end{corollary}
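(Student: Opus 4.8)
The plan is to combine three facts: the identity $\rank(F)=2\mu(F)$ for forests, the Rank--nullity theorem, and the stability of the matching number under 2-switch. First I would observe that since $\tau$ is an f-switch over $F$, the graph $\tau(F)$ is again a forest in $\mathcal{F}(s)$ by Theorem~\ref{teofswitch}, so the identity $\rank(\tau(F))=2\mu(\tau(F))$ applies to it as well. Then
\[
|\rank(\tau(F))-\rank(F)|=2\,|\mu(\tau(F))-\mu(F)|,
\]
and by Theorem~\ref{matchlem3} the matching number is stable under 2-switch, so $|\mu(\tau(F))-\mu(F)|\in\{0,1\}$. Hence $|\rank(\tau(F))-\rank(F)|\in\{0,2\}$.

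Next I would handle the nullity. Both $F$ and $\tau(F)$ have the same order $n$ (a 2-switch never changes the vertex set), so by the Rank--nullity theorem $\mnull(F)=n-\rank(F)$ and $\mnull(\tau(F))=n-\rank(\tau(F))$. Subtracting gives
\[
\mnull(\tau(F))-\mnull(F)=\rank(F)-\rank(\tau(F)),
\]
so $|\mnull(\tau(F))-\mnull(F)|=|\rank(\tau(F))-\rank(F)|$, which closes the chain of equalities in the statement and shows the common value lies in $\{0,2\}$.

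There is essentially no obstacle here; the only point requiring a moment's care is making sure $\tau(F)$ is genuinely a forest so that $\rank(\tau(F))=2\mu(\tau(F))$ is legitimate — this is exactly the content of the definition of an f-switch together with Theorem~\ref{teofswitch}, so it is immediate. Everything else is a one-line substitution. I would write the proof as three short displayed equations in the order above, prefaced by the remark that $\tau(F)\in\mathcal{F}(s)$.
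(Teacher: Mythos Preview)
Your proposal is correct and follows essentially the same route the paper indicates: use $\rank(F)=2\mu(F)$ for forests together with the stability of the matching number (Theorem~\ref{matchlem3}) and the Rank--nullity theorem. The only minor remark is that $\tau(F)$ being a forest is immediate from the \emph{definition} of an f-switch (Definition~\ref{fsdef}); Theorem~\ref{teofswitch} is not needed for that step.
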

Notice that Corollary \ref{cororanknull} implies that a property similar to the interval property for $\rank$ and $\mnull$, except that $\rank$ takes only even values and $\mnull$ either only even or only odd. Hence we get the following.
\begin{corollary}
	Let \(s\) be a graphical sequence of length \(n\). Then, there exists an interval \(I\subset \mathbb{R}\) such that   
	\begin{enumerate}
		\item $\rank(\mathcal{F}(s))=I\cap 2\mathbb{Z}$,
		\item $\mnull(\mathcal{F}(s))=(n-I)\cap (2\mathbb{Z}+n)$.
	\end{enumerate}
\end{corollary}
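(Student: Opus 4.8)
The plan is to combine the interval property of the matching number on $\mathcal{F}(s)$ (already established in this section) with the two structural identities $\rank(F)=2\mu(F)$ and $\rank(F)+\mnull(F)=n$ for forests. First I would fix the graphical sequence $s$ of length $n$ and let $\mu_{\min}$ and $\mu_{\max}$ denote the minimum and maximum values of $\mu$ on $\mathcal{F}(s)$; by the interval property for $\mu$ (the corollary following Theorem \ref{matchlem3}), the set $\mu(\mathcal{F}(s))$ is exactly $\{\mu_{\min},\mu_{\min}+1,\dots,\mu_{\max}\}$. Next, since $\rank(F)=2\mu(F)$ for every $F\in\mathcal{F}(s)$, the image $\rank(\mathcal{F}(s))$ is the set $\{2k : \mu_{\min}\leq k\leq \mu_{\max}\}$, which is precisely $I\cap 2\mathbb{Z}$ for the interval $I=[2\mu_{\min},2\mu_{\max}]\subset\mathbb{R}$. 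This gives item (1).

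For item (2), I would invoke the Rank–nullity theorem applied to the adjacency matrix of a forest on $n$ vertices: $\mnull(F)=n-\rank(F)=n-2\mu(F)$ for every $F\in\mathcal{F}(s)$. Hence $\mnull(\mathcal{F}(s))=\{n-2k : \mu_{\min}\leq k\leq\mu_{\max}\}$. Writing $n-I=[\,n-2\mu_{\max},\,n-2\mu_{\min}\,]$, every element of $\mnull(\mathcal{F}(s))$ lies in $n-I$ and has the same parity as $n$ (being of the form $n-2k$), so $\mnull(\mathcal{F}(s))\subseteq (n-I)\cap(2\mathbb{Z}+n)$; conversely any integer in $(n-I)\cap(2\mathbb{Z}+n)$ is of the form $n-2k$ with $\mu_{\min}\leq k\leq\mu_{\max}$, hence realized, giving the reverse inclusion. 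Thus $\mnull(\mathcal{F}(s))=(n-I)\cap(2\mathbb{Z}+n)$ with the \emph{same} interval $I=[2\mu_{\min},2\mu_{\max}]$ as in (1).

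There is no real obstacle here: the statement is essentially a bookkeeping consequence of already-proven facts, and the only point requiring a little care is checking that the set $\{n-2k : k\in[\mu_{\min},\mu_{\max}]\cap\mathbb{Z}\}$ coincides with $(n-I)\cap(2\mathbb{Z}+n)$ rather than just being contained in it — i.e. that both the parity constraint and the interval constraint are simultaneously tight — which follows immediately from the interval property of $\mu$ since every intermediate value of $k$ is attained. I would also note in passing that Corollary \ref{cororanknull} is consistent with this picture: an f-switch changes $\mu$ by $0$ or $1$, hence changes $\rank$ and $\mnull$ by $0$ or $2$, so one can in fact walk through all the allowed values of $\rank$ and $\mnull$ along a single transforming f-switch sequence, though only the existence statement about $I$ is needed for the corollary as stated.
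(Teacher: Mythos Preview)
Your proposal is correct and follows exactly the approach the paper indicates: the corollary is left without explicit proof there, but the preceding sentence tells the reader to combine $\rank(F)=2\mu(F)$, the Rank--nullity theorem, and the interval property of $\mu$ on $\mathcal{F}(s)$, which is precisely what you do. The explicit choice $I=[2\mu_{\min},2\mu_{\max}]$ and the parity check for item~(2) are the only details to fill in, and you handle both correctly.
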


%%%%%%%%%%%%%%%%%%%%%%%%%%%%%%%%%%%%%%%%%%%%%%%%%%%%%%%%%%%%%%%%%%

\subsection{Independence number and vertex-covering number}\label{sectionindependencenumber}

An independent set of a graph $G$ is a set of vertices in $G$, no two of which are adjacent. A maximum independent set in $G$ is an independent set of \(G\) with the largest possible cardinality. This cardinality is called the independence number of $G$, and it is denoted by $\alpha(G)$.

\begin{theorem}\label{teoindepstable}
	The independence number is stable under 2-switch.
\end{theorem}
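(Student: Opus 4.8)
The plan is to use Lemma \ref{lemastable}, so it suffices to show that $\alpha(\tau(G))\geq \alpha(G)-1$ for every graph $G$ and every $2$-switch $\tau$. Let $\tau={{a\ b}\choose{c\ d}}$ be nontrivial in $G$ (if $\tau$ is trivial there is nothing to prove), so that $\tau$ deletes $ab,cd$ and adds $ac,bd$. Let $S$ be a maximum independent set of $G$, so $|S|=\alpha(G)$. The only way $S$ can fail to be independent in $\tau(G)$ is through the new edges $ac$ or $bd$; that is, trouble occurs only if $\{a,c\}\subseteq S$ or $\{b,d\}\subseteq S$.

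First I would observe that at most one of the two "bad" pairs can be contained in $S$. Indeed, $ab\in E(G)$ forces that $S$ does not contain both $a$ and $b$, and $cd\in E(G)$ forces that $S$ does not contain both $c$ and $d$; if both $\{a,c\}\subseteq S$ and $\{b,d\}\subseteq S$ held, then in particular $a,b\in S$, contradicting $ab\in E(G)$. So exactly one of the following holds: $S$ is already independent in $\tau(G)$ (in which case $\alpha(\tau(G))\geq|S|=\alpha(G)$, even better), or exactly one of $\{a,c\},\{b,d\}$ lies in $S$. In the latter case, by the symmetry $\tau_{A}=\tau_{PAQ}$ from Lemma \ref{switchelemprop} (which lets me relabel so that the offending pair is $\{a,c\}$), I may assume $\{a,c\}\subseteq S$ and $\{b,d\}\not\subseteq S$. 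Then remove one vertex of the bad pair: set $S'=S\setminus\{a\}$. Now $S'$ has no edge among the new edges of $\tau(G)$ (it misses $a$, hence misses $ac$; and it does not contain both $b$ and $d$, hence misses $bd$), and it also has no edge of $G$ other than possibly those incident to deleted edges $ab,cd$ — but those edges are gone in $\tau(G)$, and any edge of $G\cap\tau(G)$ inside $S'\subseteq S$ would contradict independence of $S$ in $G$. Hence $S'$ is independent in $\tau(G)$, giving $\alpha(\tau(G))\geq|S'|=\alpha(G)-1$.

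Then by Lemma \ref{lemastable}, part (2), $\alpha$ is stable under $2$-switch, which completes the proof. I do not expect a real obstacle here; the only point requiring a moment's care is the bookkeeping in the previous paragraph, namely verifying that deleting a single vertex from $S$ genuinely kills \emph{all} new edges of $\tau(G)$ inside the set — this relies on the fact established above that the two bad pairs cannot both be present, so a single well-chosen deletion suffices rather than two. As a side remark, since the vertex-covering number $\beta(G)$ satisfies $\beta(G)=n-\alpha(G)$ (Gallai's identity), stability of $\alpha$ immediately yields stability of $\beta$, and hence the interval property for both $\alpha$ and $\beta$ on $\mathcal{G}(s)$ and $\mathcal{F}(s)$ via Theorem \ref{printparamgeneral}.
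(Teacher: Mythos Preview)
Your proof is correct and follows essentially the same route as the paper's: both take a maximum independent set $S$ of $G$, observe that at most two of $a,b,c,d$ lie in $S$ (since $ab,cd\in E(G)$), and show that in the only problematic cases---when $S$ contains one of the new edges $ac$ or $bd$---removing a single vertex restores independence in $\tau(G)$, yielding $\alpha(\tau(G))\geq\alpha(G)-1$ and hence stability via Lemma~\ref{lemastable}. Your organization around ``bad pairs'' is a slight repackaging of the paper's case split on $I\cap\{a,b,c,d\}$, but the argument is the same.
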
  
\begin{proof}
Let \(G\) be a graph of order \(n\) with vertex set \([n]\), $I\subset [n]$ be a maximum independent set in $G$ and $\tau={{a \ b}\choose{c \ d}}$ be a 2-switch over \(G\). 
Notice that $|I\cap \{a,b,c,d\}|\leq 2$.

If \(\left| I \cap  \{a,b,c,d\} \right| \leq 1\), then \(\alpha(\tau(G)) \geq |I|=\alpha(G)\),  because $I$ is an independent set in $\tau(G)$. We can easily conclude the same when $I\cap \{a,b,c,d\}=\{a,d\}$ or $I\cap \{a,b,c,d\}=\{b,c\}$. 

If $I\cap \{a,b,c,d\}=\{a,c\}$, notice that $I$ is not an independent set in $\tau(G)$, since  $a$ is adjacent to $c$ in $\tau(G)$. Thus, $I-a$ is an independent set in $\tau(G)$. Hence, $\alpha(\tau(G))\geq |I-a|=\alpha(G)-1$. The same argument holds if $I\cap \{a,b,c,d\}=\{b,d\}$. 

In either case $\alpha(\tau(G))\geq \alpha(G)-1$. Thus, by Lemma \ref{lemastable} $\alpha$ is stable under 2-switch.  
\end{proof}

Theorem \ref{teoindepstable} together with Theorem \ref{printparamgeneral} imply the following.

\begin{corollary}
Let $s$ be a graphical sequence. The independence number has the interval property on $\mathcal{G}(s)$ and on $\mathcal{F}(s)$.
\end{corollary}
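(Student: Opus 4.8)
The plan is to deduce the statement directly from the two results already established: Theorem \ref{teoindepstable}, which asserts that the independence number $\alpha$ is stable under $2$-switch, and Theorem \ref{printparamgeneral}, which asserts that any integer parameter stable under $2$-switch has the interval property on both $\mathcal{G}(s)$ and $\mathcal{F}(s)$. So the corollary is essentially a one-line application of the machinery built in this section.

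Concretely, I would first observe that $\alpha$ is an integer parameter, since for every graph $G$ the value $\alpha(G)$ is the cardinality of a (maximum) independent set, hence a nonnegative integer. Next I would invoke Theorem \ref{teoindepstable} to conclude that $\alpha$ is stable under $2$-switch, i.e.\ $|\alpha(\tau(G))-\alpha(G)|\le 1$ for every graph $G$ and every $2$-switch $\tau$. With these two facts in hand, Theorem \ref{printparamgeneral} applies verbatim with $\xi=\alpha$, yielding that $\alpha$ has the interval property on $\mathcal{G}(s)$ and on $\mathcal{F}(s)$.

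There is no real obstacle here; the substantive content lives entirely in Theorem \ref{teoindepstable} (bounding the perturbation of $\alpha$ by a case analysis on $|I\cap\{a,b,c,d\}|$ for a maximum independent set $I$) and in Theorem \ref{printparamgeneral} (which uses the Forest Transition Theorem \ref{transbosques} for $\mathcal{F}(s)$ and Theorem \ref{BergeTeo} for $\mathcal{G}(s)$ to interpolate between an $\alpha$-minimizer and an $\alpha$-maximizer via unit-step changes). The only point worth flagging, for completeness, is that the argument of Theorem \ref{printparamgeneral} requires a realizing graph (resp.\ forest) at each integer value in the interval $[\alpha_{\min},\alpha_{\max}]$; this is exactly what the transition sequence provides, since every intermediate graph along the sequence lies in $\mathcal{G}(s)$ (resp.\ every intermediate forest along the f-switch sequence lies in $\mathcal{F}(s)$) and consecutive members differ in $\alpha$ by at most $1$.

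Thus the proof reduces to citing Theorems \ref{teoindepstable} and \ref{printparamgeneral}, and I would write it as such in a couple of sentences, mirroring the format of the corollaries following Theorem \ref{matchlem3}.
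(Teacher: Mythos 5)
Your proposal is correct and is exactly how the paper proves it: the corollary is stated as an immediate consequence of Theorem \ref{teoindepstable} (stability of $\alpha$ under 2-switch) combined with Theorem \ref{printparamgeneral}. Nothing further is needed.
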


A vertex cover of a graph $G$ is a set of vertices $U\subset V(G)$ such that each edge of $G$ is incident to at least one vertex of the set $U$. A minimum vertex cover of $G$ is a vertex cover of $G$ of minimum cardinality. The vertex-covering number of $G$, denoted by $\nu(G)$, is the cardinality of a minimum vertex cover of $G$. It is known that $\nu(G)=n-\alpha(G)$, where $n$ is the order of $G$. Therefore, the results for the independence number imply similar results for the vertex-covering number.

\begin{corollary}
	The vertex-covering number is stable under 2-switch.
\end{corollary}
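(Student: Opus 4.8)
The plan is to deduce the statement directly from the corresponding result for the independence number, exactly as the paper sets up with the identity $\nu(G) = n - \alpha(G)$. First I would fix a graphical sequence $s$ of length $n$, take an arbitrary graph $G \in \mathcal{G}_n$ with degree sequence $s$, and an arbitrary $2$-switch $\tau$. Since a $2$-switch preserves the degree sequence, both $G$ and $\tau(G)$ have order $n$, so $\nu(G) = n - \alpha(G)$ and $\nu(\tau(G)) = n - \alpha(\tau(G))$.

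The key computation is then a one-line chain:
\[
\left|\nu(\tau(G)) - \nu(G)\right| = \left|\bigl(n - \alpha(\tau(G))\bigr) - \bigl(n - \alpha(G)\bigr)\right| = \left|\alpha(G) - \alpha(\tau(G))\right| = \left|\alpha(\tau(G)) - \alpha(G)\right|.
\]
By Theorem \ref{teoindepstable}, $\alpha$ is stable under $2$-switch, so the right-hand side is at most $1$. Hence $\left|\nu(\tau(G)) - \nu(G)\right| \leq 1$, which is exactly the definition of $\nu$ being stable under $2$-switch.

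There is essentially no obstacle here; the only point worth a moment's care is making sure the orders of $G$ and $\tau(G)$ agree so that the constant $n$ in $\nu = n - \alpha$ is the same on both sides — but this is immediate since a $2$-switch neither adds nor removes vertices (and, a fortiori, preserves the degree sequence). If one wanted to be even more economical, one could invoke Lemma \ref{lemastable}: from $\alpha(\tau(G)) \geq \alpha(G) - 1$ one gets $\nu(\tau(G)) = n - \alpha(\tau(G)) \leq n - \alpha(G) + 1 = \nu(G) + 1$ for every $G$ and every $\tau$, and then Lemma \ref{lemastable}(1) gives stability directly. Either route is routine; I would present the first since it mirrors the structure already used for the edge-covering number in Corollary \ref{edgecoverprint}.
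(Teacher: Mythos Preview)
Your proposal is correct and follows exactly the approach the paper intends: the corollary is stated without proof immediately after the identity $\nu(G)=n-\alpha(G)$ and the remark that ``the results for the independence number imply similar results for the vertex-covering number,'' and your one-line computation (or the alternative via Lemma~\ref{lemastable}) is precisely the intended justification.
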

\begin{corollary}
	Let \(s\) be a graphical sequence. The vertex-covering number has the interval property on $\mathcal{G}(s)$ and on $\mathcal{F}(s)$.
\end{corollary}

%%%%%%%%%%%%%%%%%%%%%%%%%%%%%%%%%%%%%%%%%%%%%%%%%%%%%%%%%%%%%%%%

\subsection{Domination number}\label{sectiondominationnumber}

A dominating set of a graph $G$ is a set $D$ of vertices such that every vertex of $G$ not in $D$ is adjacent to at least one element of $D$. Under this condition, we say that $D$ dominates (or covers) a vertex $v$ if $v$ is adjacent to some vertex of $D$ or if $v\in D$. A minimum dominating set is a dominating set of minimum cardinality. The domination number of $G$, denoted by $\gamma(G)$, is the cardinality of a minimum dominating set of $G$.

\begin{lemma}
	\label{lemdom}
	The domination number is stable under $2$-switch.
\end{lemma}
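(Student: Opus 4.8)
The plan is to invoke Lemma~\ref{lemastable}, so it suffices to show that a single $2$-switch cannot decrease the domination number by more than $1$; that is, $\gamma(\tau(G))\geq \gamma(G)-1$ for every graph $G$ and every $2$-switch $\tau={{a \ b}\choose{c \ d}}$. Let $D'$ be a minimum dominating set of $\tau(G)$. The idea is to repair $D'$ into a dominating set of $G$ by adding at most one extra vertex. The only vertices whose domination status can differ between $G$ and $\tau(G)$ are those incident to the edges that were removed, namely $ab$ and $cd$; more precisely, a vertex $v$ dominated in $\tau(G)$ but not in $G$ must rely on an edge of $\tau(G)$ that is not in $G$, i.e.\ $ac$ or $bd$. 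So the set of vertices that $D'$ dominates in $\tau(G)$ but possibly fails to dominate in $G$ is contained in $\{a,b,c,d\}$, and in fact in a small subset of it depending on which of these vertices lie in $D'$.

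The key step is the case analysis on $D'\cap\{a,b,c,d\}$. First I would observe that if $a\notin D'$ and $b\notin D'$, then any vertex that $D'$ dominated via the new edge $ac$ (respectively $bd$) — which could only be $c$ (respectively $d$) — might become undominated, but adding a single vertex, e.g.\ one endpoint of the old edge $cd$, restores domination of both $c$ and $d$; a symmetric remark handles the case $c\notin D'$ and $d\notin D'$. In the remaining case at least one of $a,b$ and at least one of $c,d$ is in $D'$. Here the crucial point is that the old edges $ab$ and $cd$ are present in $G$: if $a\in D'$ then $a$ dominates $b$ in $G$, and if $c\in D'$ then $c$ dominates $d$ in $G$, so the only vertices at risk are those among $\{a,b,c,d\}$ whose unique dominator in $D'$ used a new edge. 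In each sub-case one checks that $D'$, or $D'$ together with one additional well-chosen vertex from $\{a,b,c,d\}$, dominates all of $G$, giving $\gamma(G)\leq |D'|+1=\gamma(\tau(G))+1$.

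With $\gamma(\tau(G))\geq \gamma(G)-1$ established for all $G$ and all $\tau$, Lemma~\ref{lemastable}(2) immediately yields $|\gamma(\tau(G))-\gamma(G)|\leq 1$, i.e.\ $\gamma$ is stable under $2$-switch.

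I expect the main obstacle to be bookkeeping in the case analysis: there are several configurations of $D'\cap\{a,b,c,d\}$ (up to the obvious symmetry $a\leftrightarrow b$, $c\leftrightarrow d$ together with the action-matrix symmetries from Lemma~\ref{switchelemprop}), and for each one must correctly identify which of the four special vertices can lose its dominator and verify that one repair vertex suffices. The symmetries cut the work down substantially, but care is needed because the $2$-switch is not symmetric in the columns — $ac$ and $bd$ are added while $ad$ and $bc$ are not — so a vertex like $d$ can only be newly dominated through $bd$, never through $ad$. Making that asymmetry explicit at the outset is what keeps the argument short.
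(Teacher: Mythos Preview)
Your approach is essentially the paper's, run in the opposite direction: the paper takes a minimum dominating set $D$ of $G$, shows that $D$ or $D\cup\{v\}$ (for a single $v\in\{a,b,c,d\}$) dominates $\tau(G)$, and then invokes Lemma~\ref{lemastable}(1); you start from a minimum dominating set $D'$ of $\tau(G)$ and head for Lemma~\ref{lemastable}(2). Since every $2$-switch has an inverse $2$-switch (Lemma~\ref{switchelemprop}(4)), the two directions are equivalent, so the strategy is sound.

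There is, however, a bookkeeping slip in your first case. If $a,b\notin D'$, then the new edge $ac$ can only be used by $D'$ to dominate $a$ (via $c\in D'$), not $c$; likewise $bd$ can only be used to dominate $b$. Hence the vertices that may lose their dominator when passing from $\tau(G)$ to $G$ are $a$ and $b$, not $c$ and $d$ as you wrote, and the correct repair is to add one endpoint of the old edge $ab$ (present in $G$), not of $cd$. The paper avoids this kind of case-splitting altogether with a single observation in its direction: if, say, $a$ is not dominated in $\tau(G)$ by $D$, then necessarily $b\in D$ (because $ab$ is the only edge incident to $a$ that was deleted), so $b$ already dominates $d$ via the new edge $bd$; adding $a$ then dominates $c$ via $ac$, and $D\cup\{a\}$ works. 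The same trick is available in your direction (with $c$ playing the role of $b$), and adopting it would eliminate the multi-case analysis you anticipated as the main obstacle.
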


\begin{proof}
	Let $G$ be a graph of order $n\geq 4$, $D$ a minimum dominating set on $G$, and $\tau={{a \ b}\choose{c \ d}}$ a 2-switch over \(G\). 
	
	If $D$ is a dominating set in $\tau(G)$, then $\gamma(\tau(G))\leq |D|=\gamma(G)\leq \gamma(G)+1$. 
	
	Assume $D$ is not a dominating set in $\tau(G)$. As the edges incident to vertices not in $\{a,b,c,d\}$ in $G$ and in $\tau(G)$ are the same, $D$ dominates every vertex in $V(G)-\{a,b,c,d\}$. Hence at least one vertex in $\{a,b,c,d\}$ is not dominated by \(D\) in $\tau(G)$. Without loss of generality, assume that vertex is $a$ and consider its neighbors in \(G\) and \(\tau(G)\). Since the only edge incident to $a$ in $G$ that is not in $\tau(G)$ is $ab$, $b$ must be in $D$. Therefore, $d$ is dominated in $\tau(G)$ by $b$. Moreover, $D\cup a$ is a dominating set in $\tau(G)$, because $c$ is dominated by $a$ in $\tau(G)$. Thus, $\gamma(\tau(G))\leq |D\cup a|=\gamma(G)+1$. 
	
	As in either case $\gamma(G)\leq \gamma(G)+1$, Lemma \ref{lemastable} implies that $\gamma$ is stable under 2-switch.
\end{proof}

Lemma \ref{lemdom} together with Theorem \ref{printparamgeneral} yields the following. 

\begin{corollary}
	Let \(s\) be a graphical sequence. The domination number has the interval property on $\mathcal{G}(s)$ and on $\mathcal{F}(s)$.
\end{corollary}

\subsection{Number of connected components}\label{sectionnumberofcomponents}

Previously, with Theorem \ref{=s=k}, we established that two forests with the same degree sequence $s$ must have the same number of connected components. In this sense we can rephrase Theorem \ref{=s=k} by saying that $\kappa$ is constant on $\mathcal{F}(s)$, and therefore it trivially has the interval property on $\mathcal{F}(s)$. We will prove that the same occurs on $\mathcal{G}(s)$. 

\begin{theorem}\label{teokappa}
The number of connected components is stable under 2-switch.
\end{theorem}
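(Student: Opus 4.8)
The plan is to show directly that a single 2-switch changes $\kappa$ by at most $1$, and then invoke Lemma~\ref{lemastable}. Fix a graph $G$ and a 2-switch $\tau={{a\ b}\choose{c\ d}}$ that is nontrivial on $G$ (if $\tau$ is trivial there is nothing to prove). Write $e_1=ab$, $e_2=cd$ for the edges removed and $e_1'=ac$, $e_2'=bd$ for the edges added. I would like to bound $\kappa(\tau(G))$ from above in terms of $\kappa(G)$, since Lemma~\ref{lemastable} only requires a one-sided bound (the other side follows by applying the same bound to the inverse 2-switch $\tau_{A^t}$, which exists and satisfies $\tau_{A^t}(\tau(G))=G$ by Lemma~\ref{switchelemprop}).

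The key step is the standard observation that deleting one edge from a graph increases the number of components by at most $1$, and adding one edge decreases it by at most $1$ (in fact each operation changes $\kappa$ by $0$ or $1$ in the respective direction). I would perform $\tau$ in four stages: $G \to G-e_1 \to G-e_1-e_2 \to G-e_1-e_2+e_1' \to \tau(G)$. At each of the two deletions $\kappa$ goes up by at most $1$, and at each of the two additions $\kappa$ goes down by at most $1$. Hence
\[
\kappa(G)-2 \;\le\; \kappa(\tau(G)) \;\le\; \kappa(G)+2.
\]
This only gives $|\kappa(\tau(G))-\kappa(G)|\le 2$, which is not yet enough, so the real content is to rule out the extreme cases $\pm 2$.

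The main obstacle is therefore sharpening $2$ to $1$. I would argue as follows. Suppose first $\kappa(\tau(G))\ge\kappa(G)+2$; then both deletions $G-e_1$ and $G-e_1-e_2$ must strictly increase $\kappa$ (so $e_1,e_2$ are bridges in the respective graphs) \emph{and} neither addition decreases $\kappa$, i.e.\ in $G-e_1-e_2$ the endpoints $a,c$ lie in the same component and the endpoints $b,d$ lie in the same component. But $e_1=ab$ being a bridge of $G$ means $a$ and $b$ are in different components of $G-e_1$; after also deleting $e_2=cd$, if $a$ and $c$ are in the same component of $G-e_1-e_2$ and $b$ and $d$ are in the same component of $G-e_1-e_2$, then adding $e_1'=ac$ creates no merge but adding $e_2'=bd$ would reconnect the two sides that $e_1$ separated — tracing the components one checks this forces $\kappa(\tau(G))\le\kappa(G)+1$, a contradiction. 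Symmetrically, $\kappa(\tau(G))\le\kappa(G)-2$ would force (via the inverse 2-switch) the analogous impossible configuration. I expect the cleanest way to write this is a short case analysis on how the (at most) three components containing $a,b,c,d$ in $G-e_1-e_2$ are distributed, showing that in every case at least one of the two added edges performs a merge that compensates for one of the two deletions. Once $|\kappa(\tau(G))-\kappa(G)|\le 1$ is established for all $G$ and all $\tau$, Lemma~\ref{lemastable} (indeed the definition of stability directly) finishes the proof.
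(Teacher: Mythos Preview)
Your approach is the same as the paper's: bound $\kappa(\tau(G))\le\kappa(G)+1$ by analysing the components of $G-ab-cd$ and then invoke Lemma~\ref{lemastable}. One sentence is garbled, though: under your assumption that $b,d$ lie in the \emph{same} component of $G-e_1-e_2$, adding $bd$ does \emph{not} ``reconnect the two sides that $e_1$ separated''; the contradiction actually comes from re-adding $ab$ and $cd$ to recover $G$ (only one merge occurs, so $\kappa(G-e_1-e_2)=\kappa(G)+1$, not $+2$)---this is precisely the case analysis you anticipate at the end, and it is exactly what the paper carries out.
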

\begin{proof}
Let \(G\) be a graph, \(\tau={{a \ b}\choose{c \ d}}\) a 2-switch over \(G\), and let $G'=G-ab-cd$.

Clearly $\kappa(G')\leq \kappa(G)+2$, as deleting an edge increases the number of connected components in at most 1.
If $\kappa(G')\leq \kappa(G)+1$, then $\kappa(\tau(G))=\kappa(G'+ac+bd)\leq \kappa(G)+1$, as adding edges
cannot increase the number of connected components.

Assume $\kappa(G')=\kappa(G)+2$. 
If $a$ and $c$ are in different connected components of $G'$, then $\kappa(G'+ac)=\kappa(G')-1\leq \kappa(G)+1$,
which implies $\kappa(\tau(G))\leq \kappa(G)+1$.
Suppose $a$ and $c$ are in the same connected component of $G'$
and denote such component by $C_{ac}$. Then neither $b$ nor $d$ are in $C_{ac}$, because $\kappa(G')=\kappa(G)+2$.
Furthermore, $b$ and $d$ must be in different connected components, otherwise $\kappa(G)=\kappa(G'+ab+cd)=\kappa(G')-1$. 
Hence, $\kappa(G'+bd)=\kappa(G')-1\leq \kappa(G)+1$, implying that $\kappa(\tau(G))\leq \kappa(G)+1$.

Therefore, Lemma \ref{lemastable} implies that $\kappa$ is stable under 2-switch.
\end{proof}

Theorems \ref{printparamgeneral} and \ref{teokappa} imply the next result.

\begin{corollary}
	Let \(s\) be a graphical sequence. The number of connected components has the interval property on $\mathcal{G}(s)$.
\end{corollary}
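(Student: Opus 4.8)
The plan is to invoke the general machinery already in place rather than to argue from scratch. By Theorem \ref{teokappa}, the number of connected components $\kappa$ is stable under 2-switch, and $\kappa$ is plainly an integer-valued parameter (it simply counts connected components). Hence Theorem \ref{printparamgeneral} applies verbatim and gives that $\kappa$ has the interval property on $\mathcal{G}(s)$. So the whole proof is: ``by Theorems \ref{teokappa} and \ref{printparamgeneral}''. I would also remark that on $\mathcal{F}(s)$ the statement is trivial, since Theorem \ref{=s=k} shows $\kappa$ is constant on $\mathcal{F}(s)$, which is why the corollary is phrased only for $\mathcal{G}(s)$.

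If one prefers to spell out the argument, I would unwind the proof of Theorem \ref{printparamgeneral}: choose $G_{1},G_{2}\in\mathcal{G}(s)$ with $\kappa(G_{1})=\kappa_{\min}$ and $\kappa(G_{2})=\kappa_{\max}$, the minimum and maximum of $\kappa$ over $\mathcal{G}(s)$; by Theorem \ref{BergeTeo} there is a 2-switch sequence $(\tau_{i})_{i=1}^{r}$ with $G_{2}=\tau_{r}\cdots\tau_{1}(G_{1})$; setting $H_{0}=G_{1}$ and $H_{i}=\tau_{i}(H_{i-1})$, stability gives $|\kappa(H_{i})-\kappa(H_{i-1})|\le 1$ for each $i$, so $\kappa(H_{0}),\dots,\kappa(H_{r})$ is an integer sequence from $\kappa_{\min}$ to $\kappa_{\max}$ with consecutive terms differing by at most $1$; consequently it takes every integer value in $[\kappa_{\min},\kappa_{\max}]$, and each such value is attained by some $H_{i}\in\mathcal{G}(s)$.

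There is no real obstacle: the substantive content was already discharged in Theorem \ref{teokappa} (the case analysis on whether deleting the two edges of the 2-switch raises $\kappa$ by $0$, $1$, or $2$, and, in the last case, tracking how the two added edges merge components) and in Theorem \ref{printparamgeneral}. The only point needing a word of justification is that $\kappa$ indeed takes values in $\mathbb{Z}$, which is immediate.
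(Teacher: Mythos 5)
Your proposal is correct and matches the paper exactly: the corollary is stated there as an immediate consequence of Theorem \ref{teokappa} (stability of $\kappa$) together with Theorem \ref{printparamgeneral}. The unwound version you give is just the proof of Theorem \ref{printparamgeneral} specialized to $\kappa$, so nothing further is needed.
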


%%%%%%%%%%%%%%%%%%%%%%%%%%%%%%%%%%%%%%%%%%%%%%%%%%%%%%%%%%%%%%%%%%

\subsection{Path-covering number}\label{sectionpathcover}

Let $G$ be a graph. Two paths in $G$ that do not share vertices, are said to be vertex-disjoint. A path covering of $G$ is a set of vertex-disjoint paths of $G$ containing all the vertices of $G$. The path-covering number of $G$, denoted by $\pi(G)$, is the minimum number of paths in a path-covering of $G$. A minimum path-covering in $G$ is a path-covering in $G$ of cardinality $\pi(G)$.

We can look at a path-covering $\mathcal{P}$ of $G$ as a generating forest of $G$, whose components are just the paths of  $\mathcal{P}$. 

\begin{theorem}\label{Teo_Stability_PathCovering}
The path-covering number is stable under 2-switch.
\end{theorem}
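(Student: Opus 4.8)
By Lemma~\ref{lemastable}, it suffices to prove that $\pi(\tau(G))\le \pi(G)+1$ for every graph $G$ and every $2$-switch $\tau={{a\ b}\choose{c\ d}}$. Fix a minimum path-covering $\mathcal P$ of $G$, thought of (as suggested just before the statement) as a spanning linear forest whose components are exactly the paths of $\mathcal P$, so that $\pi(G)=\kappa(\mathcal P)$. The idea is to modify $\mathcal P$ into a spanning linear forest $\mathcal P'$ of $\tau(G)$ with at most one more component, which gives $\pi(\tau(G))\le\kappa(\mathcal P')\le\kappa(\mathcal P)+1=\pi(G)+1$.

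**Main steps.** First I would dispose of the easy case: if neither $ab$ nor $cd$ lies in $\mathcal P$, then $\mathcal P$ is already a spanning linear forest of $\tau(G)$ and $\pi(\tau(G))\le\pi(G)$. Next, if exactly one of them, say $ab$, lies in $\mathcal P$, delete $ab$ from $\mathcal P$: this breaks one path into (at most) two, producing a spanning linear forest of $\tau(G)$ with at most $\kappa(\mathcal P)+1$ components, so $\pi(\tau(G))\le\pi(G)+1$. The substantive case is $ab,cd\in\mathcal P$. Remove both edges from $\mathcal P$; now $a,b,c,d$ are endpoints of (path) components of $\mathcal P-ab-cd$, and we have increased the component count by at most $2$. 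I would then argue we can re-add $ac$ and $bd$ (the edges $\tau$ inserts) while reconnecting cleverly, so as to lose at least one of the two extra components. Since $a$ and $b$ became endpoints of their respective pieces, and likewise $c$ and $d$, adding $ac$ joins the piece ending at $a$ to the piece ending at $c$, and this is legal (keeps maximum degree $\le 2$, creates no cycle) provided those two pieces are distinct; similarly for $bd$. One shows that the two "bad" configurations—where adding $ac$ would merge a path with itself, or create a vertex of degree $3$—cannot both occur, and in fact whenever one of the two additions is blocked the other is available and already recovers a component, yielding net increase $\le 1$. A clean way to organize this is by how the four vertices sit on the path(s) of $\mathcal P$ originally: whether $ab$ and $cd$ are on the same path of $\mathcal P$ (and in which cyclic order along it) or on different paths; in each sub-case one exhibits explicitly a linear forest on the same vertex set using the edges of $\tau(G)$ with $\le\kappa(\mathcal P)+1$ components.

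**Expected obstacle.** The delicate point is the same-path case: when $ab$ and $cd$ both lie on one path $P$ of $\mathcal P$, deleting them cuts $P$ into three subpaths, and the order of $a,b,c,d$ along $P$ governs which of $ac$, $bd$ can be safely reinserted. One must check that $\{a,c\}$ and $\{b,d\}$ are not edges of $G$ (which the interchangeability of $\tau$ guarantees, so no multi-edge issue arises) and, more importantly, that at least one of the two reinsertions strictly decreases the component count of $\mathcal P-ab-cd$ without ever forcing a degree-$3$ vertex—i.e., that $a,c$ (resp. $b,d$) are not already adjacent along the surviving subpaths in a way that would block the join. Careful bookkeeping of which endpoints the three subpaths have, depending on the orientation of the two deleted edges relative to $P$, is where the real work lies; once that case analysis is set up, the different-component case is handled by essentially the argument in the proof of Theorem~\ref{teokappa}.
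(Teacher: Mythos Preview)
Your proposal is correct and follows essentially the same three-case structure as the paper (neither, one, or both of $ab,cd$ lie in the minimum path-covering $\mathcal P$), with the same easy disposals in the first two cases. In the ``both'' case you and the paper diverge only in presentation: the paper simply applies $\tau$ to $\mathcal P$ itself and observes that, since a $2$-switch preserves the degree sequence, $\tau(\mathcal P)$ still has maximum degree at most $2$; hence it is a disjoint union of paths and at most one cycle, and deleting one edge of that cycle (when present) yields a path-covering of $\tau(G)$ with at most $\kappa(\mathcal P)+1$ components. This degree-preservation shortcut is exactly what makes your worry about ``creating a vertex of degree $3$'' unnecessary: after deleting $ab$ and $cd$ from $\mathcal P$ each of $a,b,c,d$ has degree at most $1$, so reinserting $ac$ and $bd$ can never push any degree above $2$. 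Your hand-tracked case analysis on the relative positions of $a,b,c,d$ along a common path of $\mathcal P$ is precisely what justifies the paper's unproven sentence ``this path breaks into a cycle $C$ and a path $Q$ after applying $\tau$'', so your outline actually fills in a detail the paper leaves implicit; conversely, adopting the paper's viewpoint would let you skip that bookkeeping entirely.
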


\begin{proof}
	Let \(G\) be a graph and $\tau={{a \ b}\choose{c \ d}}$ a 2-switch over \(G\). Suppose that $\mathcal{P}$ is a minimum path-covering in $G$. Then, $\kappa(\mathcal{P})=\pi(G)$. There are three cases:
	\begin{enumerate}
		\item Case 1: \(ab\) and \(cd\) are both edges of the forest \(\mathcal{P}\).
		\item Case 2: \(ab\) and \(cd\) are not edges of the forest \(\mathcal{P}\).
		\item Case 3: one but not both \(ab\) and \(cd\) is an edge of the forest \(\mathcal{P}\).
	\end{enumerate}
	
	Case 1: If \(\tau(\mathcal{P})\) is a forest, then \(\tau(\mathcal{P})\) is a path-covering of \(\tau(G)\). Hence, \(\pi(\tau(G))\leq \pi(G)\). If \(\tau(\mathcal{P})\) is not a forest, then the vertices \(a,b,c,\) and \(d\) are all in the same path \(P\) of \(\mathcal{P}\). This path breaks in a cycle \(C\) and a path \(Q\) after applying \(\tau\). Let \(e\) be an edge of \(C\), \(\tau(\mathcal{P})-e\) is a path-covering of \(\tau(G)\). Hence, \(\pi(\tau(G))\leq \pi(G)+1\).
	
	Case 2: As \(\tau\) is trivial in \(\mathcal{P}\), we have that \(\mathcal{P}\) is a path-covering of \(\tau(G)\). Hence, \(\pi(\tau(G))\leq \pi(G)\).
	
	Case 3: Assume that \(ab \in \mathcal{P}\) and \(cd \notin \mathcal{P}\). Note that \(\tau(\mathcal{P})-ab\) is a path-covering of \(\tau(G)\). Hence, \(\pi(\tau(G))\leq \pi(G)+1\).
	
	Therefore, \(\pi(\tau(G))\leq \pi(G)+1\) and Lemma \ref{lemastable} implies that $\pi$ is stable under 2-switch.
\end{proof}

Theorem \ref{Teo_Stability_PathCovering} together with Theorem \ref{printparamgeneral} implies the next result.

\begin{corollary}
	Let \(s\) be a graphical sequence. The path-covering number has the interval property on $\mathcal{G}(s)$ and on $\mathcal{F}(s)$.
\end{corollary}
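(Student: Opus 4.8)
The plan is to obtain this corollary as an immediate consequence of Theorem \ref{Teo_Stability_PathCovering} and Theorem \ref{printparamgeneral}. First I would observe that the path-covering number $\pi$ is an integer-valued graph parameter: for every graph $G$, $\pi(G)$ is a positive integer (bounded above by $|G|$). This is the only hypothesis of Theorem \ref{printparamgeneral} beyond stability, and it is immediate from the definition of $\pi$.

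Next I would invoke Theorem \ref{Teo_Stability_PathCovering}, which already establishes that $\pi$ is stable under 2-switch, i.e.\ $|\pi(\tau(G))-\pi(G)|\leq 1$ for every graph $G$ and every 2-switch $\tau$. With stability in hand, Theorem \ref{printparamgeneral} applies verbatim and yields that $\pi$ has the interval property on both $\mathcal{G}(s)$ and $\mathcal{F}(s)$, which is exactly the claim.

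For completeness I would recall why Theorem \ref{printparamgeneral} delivers what we want: fixing $\mathcal{X}\in\{\mathcal{G}(s),\mathcal{F}(s)\}$ and graphs $G_1,G_2\in\mathcal{X}$ realizing the minimum and the maximum of $\pi$ on $\mathcal{X}$, Theorem \ref{BergeTeo} (when $\mathcal{X}=\mathcal{G}(s)$) or the Forest Transition Theorem \ref{transbosques} (when $\mathcal{X}=\mathcal{F}(s)$) provides a 2-switch, respectively f-switch, sequence transforming $G_1$ into $G_2$; since $\pi$ changes by at most $1$ at each step of this sequence, it must attain every integer value between $\pi(G_1)$ and $\pi(G_2)$.

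There is no genuine obstacle at this point: all the substance lies in Theorem \ref{Teo_Stability_PathCovering} (the case analysis according to whether $ab$ and $cd$ are edges of a chosen minimum path-covering $\mathcal{P}$) and in Theorem \ref{transbosques}. If one preferred a self-contained argument bypassing the general machinery, the only mild subtlety would be that in the forest case the transforming sequence must consist of f-switches, so that every intermediate graph remains a forest in $\mathcal{F}(s)$ where $\pi$ is defined and the stability bound is used — and this is precisely what Theorem \ref{transbosques} guarantees.
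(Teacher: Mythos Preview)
Your proposal is correct and follows exactly the paper's approach: the corollary is obtained directly by combining Theorem \ref{Teo_Stability_PathCovering} (stability of $\pi$ under 2-switch) with Theorem \ref{printparamgeneral}. Your additional remarks merely unfold the content of Theorem \ref{printparamgeneral} and do not add or omit anything substantive.
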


%%%%%%%%%%%%%%%%%%%%%%%%%%%%%%%%%%%%%%%%%%%%%%%%%%%%%%%%%%%%%%%%%

\subsection{Chromatic number}\label{sectionchromatic}

A coloring of $G$ is a  function $f:V(G)\rightarrow \mathbb{N}$ such that $x\sim  y$ implies $f(x)\neq f(y)$, for every $xy\in G$. In this context, $f(V(G))$ is called the set of colors of $f$. If $|f(V(G))|=k$, we say that $f$ is a $k$-coloring of $G$. The chromatic number of a graph $G$, denoted with $\chi(G)$, is the smallest value of $k$ for which there is a $k$-coloring in $G$.

As forests are bipartite,  $\chi$ trivially has the interval property on $\mathcal{F}(s)$. This is not so immediate for $\mathcal{G}(s)$.

\begin{theorem}
The chromatic number is stable under 2-switch.
\end{theorem}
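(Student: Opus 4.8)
The plan is to invoke Lemma \ref{lemastable}: it suffices to prove that $\chi(\tau(G))\le \chi(G)+1$ for every graph $G$ and every $2$-switch $\tau$. If $\tau$ is trivial for $G$ there is nothing to do, so assume $\tau={{a \ b}\choose{c \ d}}$ is interchangeable in $G$, whence $\tau(G)=G-ab-cd+ac+bd$ and $a,b,c,d$ are four distinct vertices. Fix an optimal proper coloring $f\colon V(G)\to\{1,\dots,k\}$ with $k=\chi(G)$.

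The idea is that passing from $G$ to $\tau(G)$ only removes the edges $ab,cd$ and adds $ac,bd$, so $f$ can fail to be proper on $\tau(G)$ only at the edge $ac$ or at the edge $bd$. I would repair this with a single fresh color $k+1$, recoloring only $a$ and/or $b$. Concretely, define $g\colon V(\tau(G))\to\{1,\dots,k+1\}$ by $g(v)=f(v)$ for all $v\notin\{a,b\}$, by $g(a)=k+1$ if $f(a)=f(c)$ and $g(a)=f(a)$ otherwise, and by $g(b)=k+1$ if $f(b)=f(d)$ and $g(b)=f(b)$ otherwise.

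Then I would check that $g$ is a proper coloring of $\tau(G)$. Every edge of $\tau(G)$ not incident to $a$ or $b$ is also an edge of $G$ with both endpoints keeping their $f$-color, hence it is properly colored. The new edge $ac$ is fine because either $g(a)=k+1\ne g(c)$, or $g(a)=f(a)\ne f(c)=g(c)$; the new edge $bd$ is handled symmetrically. For an edge $ax$ of $\tau(G)$ with $x\ne c$, we have $x\sim a$ in $G$ and $x\ne b$, so $g(x)=f(x)\ne f(a)$; if $g(a)=f(a)$ this suffices, and if $g(a)=k+1$ then $g(a)=k+1>k\ge f(x)=g(x)$. Edges $bx$ with $x\ne d$ are treated the same way. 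The only remaining point is that $a$ and $b$ might both receive the color $k+1$, but $ab\notin E(\tau(G))$, so that is not a conflict. Hence $g$ is a proper coloring of $\tau(G)$ using at most $k+1$ colors, so $\chi(\tau(G))\le k+1=\chi(G)+1$, and Lemma \ref{lemastable} then gives stability.

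I expect the only delicate point to be exactly the observation just made — that the single repair color may be reused simultaneously on $a$ and $b$ precisely because the edge $ab$ has been deleted — together with the routine verification that recoloring $a$ (respectively $b$) with a color that appears nowhere else in the image of $g$ cannot create a conflict along $a$'s (respectively $b$'s) other incident edges in $\tau(G)$.
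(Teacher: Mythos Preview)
Your proof is correct and follows essentially the same route as the paper: both invoke Lemma~\ref{lemastable} and exhibit a proper $(k+1)$-coloring of $\tau(G)$ obtained from an optimal coloring of $G$ by assigning a single fresh color to two of the four vertices involved in the switch, exploiting that the edge between those two vertices has been deleted. The only cosmetic difference is that the paper recolors $c$ and $d$ (unconditionally) whereas you recolor $a$ and/or $b$ (only when a conflict actually arises); your verification is also more explicit than the paper's.
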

\begin{proof}
	 Let $\tau={{a \ b}\choose{c \ d}}$ be a 2-switch over a graph $G$ which has a $k$-coloring $f$, with $k=\chi(G)$.
	 Notice that if $f$ is not a coloring of $\tau(G)$, then either $f(a)=f(c)$ or $f(b)=f(d)$.	
	  
	  Assume without loss that $f(V(G))=\{1,\ldots, k\}$. Define $h:V(G)\rightarrow \mathbb{N}$ as
	 \[
h(g)=\left\lbrace\begin{array}{ll}
f(g),\quad&\text{if }g\not\in\{c,d\},\\
k+1,\quad&\text{otherwise.}
\end{array}\right.
\]	 
Then $h(a)\neq h(c)$ and $h(b)\neq h(d)$, which implies that $h$ is a coloring of $\tau(G)$.
Furthermore, $|h(V(\tau(G))|=k+1$.
Therefore, $\chi(\tau(G)) \leq \chi(G)+1$ and Lemma \ref{lemastable} implies that $\tau$ is stable under 2-switch.
\end{proof}

\begin{corollary}
	Let \(s\) be a graphical sequence. The chromatic number has the interval property on $\mathcal{G}(s)$.
\end{corollary}    
%
%\textit{Proof}. $\square$   
%
%%Since $ab\in G$, we can assume without loss of generality that they are painted in $G$ respectively with colors 1 and 2. If colors of $c$ and $d$ are both $>2$, 

%%%%%%%%%%%%%%%%%%%%%%%%%%%%%%%%%%%%%%%%%%%%%%%%%%%%%%%%%%%%%%%%%%

\subsection{Clique number}\label{sectionclique}

Let $G$ be a graph. A clique of order $i$ in $G$ is a subgraph of $G$ isomorphic to  $K_{i}$, the complete graph on $i$ vertices. The clique number of $G$, denoted by $\omega(G)$, is the maximum order of a clique contained in $G$. 

Since forests are acyclic, $\omega(F)\in\{1,2\}$ for any forest $F$, and hence $\omega$ trivially has the interval property on $\mathcal{F}(s)$. This is not so immediate for $\mathcal{G}(s)$.

\begin{theorem}
The clique number is stable under 2-switch.
\end{theorem}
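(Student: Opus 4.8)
The plan is to invoke Lemma \ref{lemastable}: it suffices to show that $\omega(\tau(G))\leq \omega(G)+1$ for every graph $G$ and every $2$-switch $\tau$. If $\tau$ is trivial for $G$ there is nothing to prove, so I would assume $\tau=\binom{a\ b}{c\ d}$ is interchangeable in $G$; then the only edges of $\tau(G)$ that are not edges of $G$ are $ac$ and $bd$, and the edges $ab$, $cd$ of $G$ are \emph{not} edges of $\tau(G)$.

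The core step is to take a maximum clique $K$ in $\tau(G)$ and produce from it a clique of $G$ of order at least $|K|-1$. First I would observe that $K$ cannot contain all four of $a,b,c,d$: since $ab\notin E(\tau(G))$, the vertices $a$ and $b$ are nonadjacent in $\tau(G)$, so they cannot both lie in a clique of $\tau(G)$. Consequently $K$ fails to contain at least one of the two added edges; say, without loss of generality, $\{b,d\}\not\subseteq V(K)$, so $K$ does not use the edge $bd$. Two subcases remain. If $K$ also does not use the edge $ac$, then every edge of $K$ already belongs to $G$, so $K$ is a clique of $G$ and $\omega(G)\geq |K|=\omega(\tau(G))$. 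Otherwise $a,c\in V(K)$, and I would delete $a$: the set $V(K)\setminus\{a\}$ induces in $\tau(G)$ a clique that uses neither $ac$ (its endpoint $a$ is gone) nor $bd$, hence all of its edges lie in $G$, giving a clique of $G$ of order $|K|-1$. In every case $\omega(G)\geq \omega(\tau(G))-1$, i.e. $\omega(\tau(G))\leq \omega(G)+1$, and Lemma \ref{lemastable} finishes the proof.

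I do not expect a genuine obstacle here; the only point requiring a moment's care is the reduction "$K$ omits at least one added edge," which is exactly where the deletion of $ab$ (equivalently $cd$) is used — this is what rules out the bad scenario in which both new edges $ac$ and $bd$ are simultaneously exploited by a single clique. Everything else is a routine vertex-deletion argument.

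The corollary that $\omega$ has the interval property on $\mathcal{G}(s)$ (and, trivially, on $\mathcal{F}(s)$, since $\omega(F)\in\{1,2\}$) then follows immediately from Theorem \ref{printparamgeneral}.
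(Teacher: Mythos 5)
Your proof is correct and is essentially the paper's argument run in the mirror direction: the paper takes a maximum clique $K$ of $G$, uses $ac,bd\notin E(G)$ to conclude $|E(K)\cap\{ab,cd\}|\leq 1$, and deletes one vertex to obtain $\omega(\tau(G))\geq\omega(G)-1$, while you apply the same at-most-one-bad-edge plus vertex-deletion reasoning to a maximum clique of $\tau(G)$ (using $ab\notin E(\tau(G))$ to rule out using both added edges) to obtain $\omega(\tau(G))\leq\omega(G)+1$. Both bounds plug into Lemma \ref{lemastable}, so the two write-ups are interchangeable.
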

\begin{proof}
Let $G$ be a graph, $\tau={{a \ b}\choose{c \ d}}$ a a nontrivial 2-switch over $G$ and $K$ a clique of order $\omega(G)$ in $G$.
As $ac,bd\not\in E(G)$, $|E(K)\cap \{ab,cd\}|\leq 1$.

If $ab,cd\not\in E(K)$, then $K$ is clique in $\tau(G)$ and $\omega(\tau(G))\geq \omega(G)$.

Assume $|E(K)\cap\{ab,cd\}|=1$ and assume without loss that $ab\in E(K)$.
As $ac,bd\not\in E(G)$, this implies $c,d\not\in K$. Hence, the only edge in $E(K)-E(\tau(G))$ is $ab$.
Thus $K-a$ is a clique in $\tau(G)$ of order $\omega(G)-1$.  Hence $\omega(\tau(G))\geq \omega(G)-1$.

Therefore, $\omega(\tau(G))\geq \omega(G)-1$ and $\omega$ is stable under 2-switch.
\end{proof}

\begin{corollary}
	Let \(s\) be a graphical sequence. The clique number has the interval property on $\mathcal{G}(s)$.
\end{corollary}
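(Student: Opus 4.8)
The plan is simply to combine the theorem just proved with the general machinery already set up in Section~\ref{Stability_intevalProperty}. Since $\omega$ is an integer-valued graph parameter and the preceding theorem shows that $\omega$ is stable under $2$-switch, the hypotheses of Theorem~\ref{printparamgeneral} are met with $\xi=\omega$. That theorem then yields at once that $\omega$ has the interval property on $\mathcal{G}(s)$ (and on $\mathcal{F}(s)$, though over forests this is already clear because $\omega(F)\in\{1,2\}$). So the entire proof amounts to one sentence of the form ``by the preceding theorem and Theorem~\ref{printparamgeneral}''.

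For completeness I would unwind what that appeal to Theorem~\ref{printparamgeneral} concretely does here. Pick $G_{1},G_{2}\in\mathcal{G}(s)$ realizing the minimum and the maximum of $\omega$ over $\mathcal{G}(s)$. By Theorem~\ref{BergeTeo} there is a $2$-switch sequence $(\tau_{i})_{i=1}^{r}$ with $G_{2}=\tau_{r}\cdots\tau_{1}(G_{1})$, and every intermediate graph lies in $\mathcal{G}(s)$ since $2$-switches preserve the degree sequence. Stability says that consecutive terms of the integer sequence $\omega(G_{1}),\omega(\tau_{1}(G_{1})),\dots,\omega(G_{2})$ differ by at most $1$, so this sequence attains every integer between $\omega(G_{1})$ and $\omega(G_{2})$; each such value is realized by a graph of $\mathcal{G}(s)$, which is exactly the interval property.

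I do not expect any genuine obstacle: all the substantive work lies in the stability bound $|\omega(\tau(G))-\omega(G)|\le 1$, handled by the preceding theorem, and in Theorem~\ref{BergeTeo}, quoted from the literature. The only points deserving a word are the degenerate cases where $\mathcal{G}(s)$ is empty (the statement is vacuous) or a singleton (the interval collapses to a point), and the harmless observation that $\mathcal{F}(s)$ is not needed for this particular corollary.
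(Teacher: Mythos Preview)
Your proposal is correct and matches the paper's approach exactly: the corollary is stated without proof in the paper because it follows immediately from the preceding stability theorem for $\omega$ together with Theorem~\ref{printparamgeneral}. Your unwinding of that appeal via Theorem~\ref{BergeTeo} and the step-by-step change in $\omega$ is precisely the content of the proof of Theorem~\ref{printparamgeneral}, so nothing more is needed.
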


%%%%%%%%%%%%%%%%%%%%%%%%%%%%%%%%%%%%%%%%%%%%%%%%%%%%%%%%%%%%%%%%

\section{Conclusions}\label{sectionconclusions}
The main results achieved in this work are the following.

\begin{theorem}[Forest Transition Theorem]
		Let $s$ be a graphical sequence and $F,F'\in \mathcal{F}(s)$. Then \(F\) can be transformed into \(F'\) with at most $|E(F')-E(F)|-1$ f-switches given by the Transition Algorithm.
\end{theorem}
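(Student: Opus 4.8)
The plan is to combine an existence statement with a careful count of the length of the sequence that the Transition Algorithm outputs; most of the work is already packaged in earlier results, so the proof is really an assembly plus one accounting argument.

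\emph{Existence.} First I would argue that the Transition Algorithm terminates and returns a genuine f-switch sequence from $F$ to $F'$; this is exactly Theorem \ref{IPTT}. One reduces to forests without isolated vertices (these coincide in any two members of $\mathcal{F}(s)$), checks the cases $n\le 4$ by inspection, and then runs an induction on the order $n$. In the inductive step, if $\Lambda(F,F')\neq\varnothing$ one passes to $F-\Lambda$ and $F'-\Lambda$, two smaller forests with a common degree sequence, applies the inductive hypothesis, and lifts the resulting f-switch sequence back up via Lemma \ref{tauiL}; if $\Lambda(F,F')=\varnothing$, Lemma \ref{ponerhojascompartidas} produces a single f-switch $\tau$ with $\Lambda(\tau(F),F')\neq\varnothing$, which reduces to the previous case. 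Both lemmas ultimately rest on the characterization of f-switches in Theorems \ref{tsiff} and \ref{teofswitch}.

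\emph{The length bound.} Let $(\tau_i)_{i=1}^r$ be the output, $F_0=F$, and $F_i=\tau_i(F_{i-1})$, so $F_r=F'$. By construction each counted step of the algorithm adds an edge of $F'$ that was not present before (it makes some leaf trimmable against $F'$), hence $|E(F')\cap E(F_i)|\ge |E(F')\cap E(F_{i-1})|+1$ for every $i$. Summing for $i=1,\dots,r-1$ and telescoping gives $|E(F')\cap E(F_{r-1})|-|E(F')\cap E(F)|\ge r-1$. For the final step, $F_{r-1}\neq F'$ while they share the degree sequence $s$, so $|E(F')-E(F_{r-1})|\neq 0$ and, by Lemma \ref{etaneq1}, $|E(F')-E(F_{r-1})|\neq 1$; hence $|E(F')\cap E(F_r)|-|E(F')\cap E(F_{r-1})|=|E(F')-E(F_{r-1})|\ge 2$. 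Adding the two inequalities and using $E(F_r)=E(F')$, the left side collapses to $|E(F')|-|E(F')\cap E(F)|=|E(F')-E(F)|$, so $|E(F')-E(F)|\ge r+1$, i.e. $r\le |E(F')-E(F)|-1$.

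\emph{Main obstacle.} The only genuinely delicate point is the existence half, and within it Lemma \ref{ponerhojascompartidas}: showing that when no trimmable leaf exists a single f-switch can always create one. This requires a case split on whether the target neighbor $u$ of the chosen leaf $\ell$ has degree at least $2$ or the whole forest is a matching, and, when $\ell$ and $u$ lie in the same component of $F$, choosing a neighbor $w$ of $u$ off the $\ell$–$u$ path so that $\binom{\ell\ v}{u\ w}$ is a t-switch (when they lie in different components, part~(2) of Theorem \ref{teofswitch} makes any choice of $w$ work). Once this and Lemma \ref{etaneq1} are in place, everything above is bookkeeping.
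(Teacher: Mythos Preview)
Your proposal is correct and follows essentially the same route as the paper: existence via Theorem~\ref{IPTT} (induction on the order, using Lemmas~\ref{tauiL} and~\ref{ponerhojascompartidas}), and the length bound via the same telescoping sum together with Lemma~\ref{etaneq1}. Your explicit remark that $F_{r-1}\neq F'$ is needed for the $\ge 2$ step is a small clarification the paper leaves implicit.
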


\begin{theorem}
	The following parameters are stable under 2-switch:
	\begin{enumerate}
		\item matching number, 
		\item independence number,
		\item domination number,
		\item path-covering number,
		\item edge-covering number,
		\item vertex-covering number,
		\item chromatic number,
		\item clique number,
		\item number of connected components.
	\end{enumerate}
\end{theorem}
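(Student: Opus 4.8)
Since each of these nine parameters is treated in its own subsection of Section~\ref{Stability_intevalProperty}, my plan is to assemble the statement from those pieces rather than redo the work, but let me first isolate the common engine so the assembly is uniform. The key reduction is Lemma~\ref{lemastable}: an integer parameter $\xi$ is stable under $2$-switch as soon as one establishes a \emph{one-sided} bound valid for \emph{every} graph $G$ and \emph{every} $2$-switch $\tau$ — either $\xi(\tau(G))\le \xi(G)+1$ always, or $\xi(\tau(G))\ge \xi(G)-1$ always — because the inverse switch $\tau^{t}$ (Lemma~\ref{switchelemprop}(4)) then supplies the complementary inequality. So for each parameter it suffices to fix an optimal witness structure for $\xi(G)$ (a maximum matching, maximum independent set, minimum dominating set, minimum path-covering, or maximum clique) and track how the four vertices $a,b,c,d$ of $\tau={{a\ b}\choose{c\ d}}$ meet that structure, producing in $\tau(G)$ a near-optimal structure that loses (or gains) at most one element.

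Concretely, I would invoke the results already proved: the matching number by Theorem~\ref{matchlem3} (via Lemmas~\ref{matchlem1} and~\ref{matchlem2}, splitting on whether $e_1,e_2$ lie in $M$ or not); the edge-covering number by Corollary~\ref{edgecoverprint}, using the identity $\epsilon(G)=n-\mu(G)$; the independence number by Theorem~\ref{teoindepstable} (the argument bounding $|I\cap\{a,b,c,d\}|\le 2$ and removing at most one vertex of $I$ when the new edges $ac$ or $bd$ hit it); the vertex-covering number from $\nu(G)=n-\alpha(G)$; the domination number by Lemma~\ref{lemdom} (if $D$ fails to dominate in $\tau(G)$, some endpoint of a deleted edge, say $a$, is undominated, forcing $b\in D$, and $D\cup\{a\}$ works); the number of connected components by Theorem~\ref{teokappa} (bounding $\kappa(G-ab-cd)\le\kappa(G)+2$ and re-adding $ac,bd$); the path-covering number by Theorem~\ref{Teo_Stability_PathCovering} (viewing a minimum path-covering as a spanning linear forest and breaking any cycle created by $\tau$ with one extra edge deletion); the chromatic number by the theorem in Subsection~\ref{sectionchromatic} (recolor $c$ and $d$ with a fresh color $k+1$); and the clique number by the theorem in Subsection~\ref{sectionclique} (since $ac,bd\notin E(G)$, a maximum clique meets $\{ab,cd\}$ in at most one edge, so deleting one vertex suffices). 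Applying Lemma~\ref{lemastable} to each one-sided bound finishes all nine cases.

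Since every ingredient has already been established in the preceding subsections, there is no genuine obstacle here; the ``hard part'' is purely bookkeeping — making sure each cited statement is indeed a statement about \emph{arbitrary} $2$-switches over \emph{arbitrary} graphs (not merely forests), so that Lemma~\ref{lemastable} applies, and noting that the edge-covering and vertex-covering cases go through the Gallai-type identities rather than a direct case analysis. I would therefore write the proof as: ``This is the conjunction of Theorems~\ref{matchlem3}, \ref{teoindepstable}, \ref{teokappa}, \ref{Teo_Stability_PathCovering}, Lemma~\ref{lemdom}, Corollary~\ref{edgecoverprint}, and the stability results for the vertex-covering, chromatic, and clique numbers proved in Subsections~\ref{sectionindependencenumber}, \ref{sectionchromatic}, and~\ref{sectionclique}, respectively.''
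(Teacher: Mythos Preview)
Your proposal is correct and matches the paper's approach exactly: the theorem appears in the Conclusions section as a summary statement with no separate proof, so assembling it from the individual stability results of Subsections~\ref{sectionmatchingnumber}--\ref{sectionclique} is precisely what is intended. Your citations are accurate, and your one-line recaps of each argument (and of the Gallai-type identities for $\epsilon$ and $\nu$) faithfully reflect the proofs given there.
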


\begin{theorem}
	Let \(s\) be a graphical sequence. The following parameters  have the interval property on $\mathcal{G}(s)$ and on $\mathcal{F}(s)$:
	\begin{enumerate}
		\item matching number, 
		\item independence number,
		\item domination number,
		\item path-covering number,
		\item edge-covering number,
		\item vertex-covering number,
		\item chromatic number,
		\item clique number,
		\item number of connected components.
	\end{enumerate}
\end{theorem}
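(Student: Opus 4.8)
The plan is to observe that this theorem is a direct bookkeeping consequence of the two engines already built: the Forest Transition Theorem (Theorem \ref{transbosques}), together with Theorem \ref{BergeTeo}, which guarantee that any two members of $\mathcal{F}(s)$ (resp. $\mathcal{G}(s)$) are joined by a sequence of f-switches (resp. 2-switches); and Theorem \ref{printparamgeneral}, which packages these transition results by showing that any integer parameter stable under 2-switch automatically has the interval property on both $\mathcal{G}(s)$ and $\mathcal{F}(s)$. So the only work is to certify, one parameter at a time, that it is stable under 2-switch, and then quote Theorem \ref{printparamgeneral}.

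Concretely, I would first recall that stability under 2-switch has been established for each of the nine parameters in the preceding subsections: the matching number (Theorem \ref{matchlem3}), the independence number (Theorem \ref{teoindepstable}), the domination number (Lemma \ref{lemdom}), the path-covering number (Theorem \ref{Teo_Stability_PathCovering}), the edge-covering number (Corollary \ref{edgecoverprint}, via $\epsilon=n-\mu$), the vertex-covering number (via $\nu=n-\alpha$), the chromatic number (Section \ref{sectionchromatic}), the clique number (Section \ref{sectionclique}), and the number of connected components (Theorem \ref{teokappa}). In each case stability was reduced, through Lemma \ref{lemastable}, to a one-sided estimate $\xi(\tau(G))\leq\xi(G)+1$ or $\xi(\tau(G))\geq\xi(G)-1$ for an arbitrary graph $G$ and an arbitrary 2-switch $\tau$.

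With stability in hand for every parameter in the list, I would simply apply Theorem \ref{printparamgeneral} nine times, once for each $\xi$, to conclude that $\xi(\mathcal{G}(s))$ and $\xi(\mathcal{F}(s))$ are each of the form $I\cap\mathbb{Z}$ for some real interval $I$. For completeness I would note the three parameters whose body-level corollaries only asserted the statement for $\mathcal{G}(s)$ — the chromatic number, the clique number, and the number of connected components — are nonetheless covered on $\mathcal{F}(s)$ either by Theorem \ref{printparamgeneral} directly or, more transparently, because on forests $\chi\in\{1,2\}$, $\omega\in\{1,2\}$, and $\kappa$ is constant on $\mathcal{F}(s)$ by Theorem \ref{=s=k}, so each trivially realizes an interval of integers there.

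There is no genuine obstacle here: the theorem is a summary statement and its proof is purely a matter of invoking previously proved stability results and Theorem \ref{printparamgeneral}. If anything, the only point requiring a sentence of care is the reconciliation just mentioned between the weaker body-level corollaries (stated only for $\mathcal{G}(s)$ in the chromatic, clique, and component cases) and the stronger combined claim here; this is handled by the triviality remarks about forests being bipartite, acyclic, and of fixed component number.
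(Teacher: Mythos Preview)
Your proposal is correct and matches the paper's approach: this theorem appears in the Conclusions section as a summary statement without its own proof, being the aggregation of the nine individual corollaries established in Section~\ref{Stability_intevalProperty} via stability plus Theorem~\ref{printparamgeneral}. Your added remark reconciling the $\mathcal{G}(s)$-only corollaries for $\chi$, $\omega$, and $\kappa$ with the $\mathcal{F}(s)$ claim here is exactly the kind of care the paper handles through the triviality observations in those subsections and Theorem~\ref{=s=k}.
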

	
Applying Thereom \ref{=s=k} together with Theorem \ref{transbosques}, we get the Transition Theorem for trees.
\begin{theorem}[Tree Transition Theorem]
	Let $T$ and $T'$ be two trees with the same degree sequence. Then \(T\) can be transformed into \(T'\) with at most $|E(T')-E(T)|-1$ t-switches.
\end{theorem}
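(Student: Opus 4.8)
The plan is to derive this immediately from the Forest Transition Theorem (Theorem~\ref{transbosques}) together with Theorem~\ref{=s=k}, observing that for trees the notions of f-switch and t-switch coincide. First I would let $s$ be the common degree sequence of $T$ and $T'$, and note that $T\in\mathcal{F}(s)$, so $\mathcal{F}(s)$ is nonempty and contains a tree. By Theorem~\ref{=s=k}, every forest in $\mathcal{F}(s)$ has the same number of connected components as $T$, namely $1$; hence $\mathcal{F}(s)$ consists entirely of trees, and in particular $T'\in\mathcal{F}(s)$ as well (it is a tree with degree sequence $s$).

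Next I would apply Theorem~\ref{transbosques} to $F=T$ and $F'=T'$: the Transition Algorithm produces a sequence of f-switches $(\tau_i)_{i=1}^r$ with $r\leq |E(T')-E(T)|-1$ transforming $T$ into $T'$. Writing $T_0=T$ and $T_i=\tau_i(T_{i-1})$, each $T_i$ lies in $\mathcal{F}(s)$ (f-switches preserve the degree sequence and the forest structure), and by the previous paragraph each $T_i$ is therefore a tree. Thus every $\tau_i$ is a nontrivial $2$-switch carrying a tree to a tree, i.e. a t-switch by Definition~\ref{tsdef}; moreover, since each $T_{i-1}$ has a single connected component, the two edges that $\tau_i$ acts on necessarily lie in the same component, which is exactly the regime in which Theorem~\ref{teofswitch} identifies f-switches with t-switches. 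Hence $(\tau_i)_{i=1}^r$ is a sequence of t-switches transforming $T$ into $T'$ with $r\leq |E(T')-E(T)|-1$, as claimed.

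There is essentially no obstacle here beyond bookkeeping: the only point requiring a moment's care is confirming that $T'$ genuinely belongs to the family $\mathcal{F}(s)$ to which the Forest Transition Theorem applies, and this is precisely what Theorem~\ref{=s=k} guarantees once we know $\mathcal{F}(s)$ contains the tree $T$. Everything else is a direct unpacking of the definitions of t-switch and f-switch.
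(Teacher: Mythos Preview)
Your proposal is correct and follows essentially the same approach as the paper: the authors state the Tree Transition Theorem as an immediate consequence of Theorem~\ref{=s=k} together with the Forest Transition Theorem (Theorem~\ref{transbosques}), exactly as you do. Your additional remark that Theorem~\ref{teofswitch} identifies f-switches with t-switches on a single tree is a harmless elaboration of the same idea.
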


In the same way that we used the Forest Transition Theorem to prove the interval property
for many parameters in $\mathcal{F}(s)$, analogous versions of the 
result for other families of graphs would yield analogous interval properties.
This raises the question, for which other families of graphs is there a transition theorem?
Bipartite graphs do not have a transition theorem, see the bipartite graphs in
Figure \ref{contrabipartitos}. Notice that both $G_0$ and $G_1$ have degree sequence $s=(6,5,4,4,3,3,3,2,2,2,2)$.
Furthermore, they are not isomorphic as the vertex of degree $6$ in $G_0$ has neighbors of degrees $2,2,2,3,4,5$, whereas the vertex of degree $6$ in $G_1$ has neighbors of degrees $2,2,3,3,4,5$.
Notice that the vertices of degree $4$ are in different partite sets in $G_0$, and in the same partite 
set in $G_1$. 
If $\tau={{a \ b}\choose{c \ d}}$ is non-trivial in $G_0$, then $G_0-\{ab,cd\}$ is connected with 
the same partite sets as $G_0$.
Thus, vertices $3$ and $4$ are in different partite sets in $G_0-\{ab,cd\}$, and if $\tau (G_0)$ is bipartite,
$3$ and $4$ are in different partite sets in $\tau(G_0)$. This will be the case, no matter how many 
2-switches one applies. Hence, to obtain $G_1$, we need to go through a non-bipartite graph at some point.
This means that there is no transition theorem for bipartite graphs.

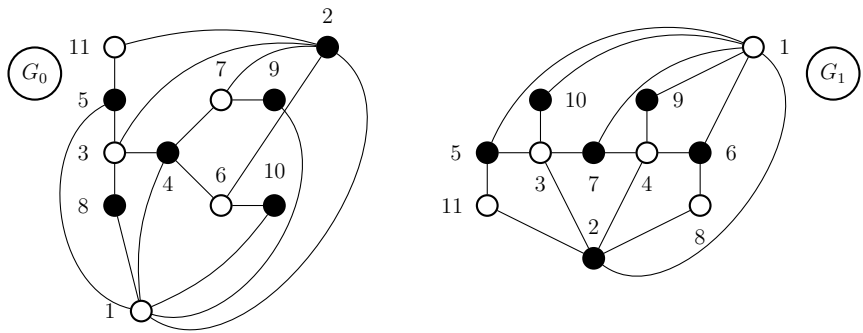
\begin{figure}[H]
	\[
	\begin{tikzpicture}
	[scale=.7,auto=left,every node/.style={scale=.7,circle,thick,draw}] 
	\node (0) at (-1.5,1.5) {$G_{0}$};
	\node [label=left:11](11) at (0,2) {};
	\node [label=left:5,fill](5) at (0,1) {};
	\node [label=left:3](3) at (0,0) {};
	\node [label=left:8,fill](8) at (0,-1) {};
	\node [label=above:9,fill](9) at (3,1) {};
	\node [label=above:10,fill](10) at (3,-1) {};
	\node [label=below:4,fill](4) at (1,0) {};
	\node [label=above:7](7) at (2,1) {};
	\node [label=above:6](6) at (2,-1) {};
	\node [label=above:2,fill](2) at (4,2) {};
	\node [label=left:1](1) at (.5,-3) {};	
	\draw (5) -- (3) ;
		\draw (5) -- (11) ;
	\draw (3) -- (8);
	\draw (9) -- (7) ;
	\draw (6) -- (10) ;
	\draw (3) -- (4) ;
	\draw (7) -- (4) ;
	\draw (6) -- (4) ;
	\draw (2) [bend right=35] to (3) ;
	\draw (2) [bend right=30] to (7) ;	
	\draw (2) -- (6) ;	
		\draw (2) [bend right=15] to(11) ;	
	\draw (1) [bend left=70] to (5);
	\draw (1) -- (8);
	\draw (1) [bend right=75] to (9);
	\draw (1) [bend right=15] to (10);
	\draw (1)[bend left=15] to (4);
	\draw (1) [bend right=90] to (2);
				
	\node (0) at (13.5,1.5) {$G_{1}$};
		\node [label=left:11](11a) at (7,-1) {};
	\node [label=left:5,fill](5a) at (7,0) {};
	\node [label=below:3](3a) at (8,0) {};
	\node [label=below:7,fill](7a) at (9,0) {};
	\node [label=below:4](4a) at (10,0) {};
	\node [label=right:6,fill](6a) at (11,0) {};
	\node [label=below:8](8a) at (11,-1) {};
	\node [label=right:10,fill](10a) at (8,1) {};
	\node [label=right:9,fill](9a) at (10,1) {};
	\node [label=above:2,fill](2a) at (9,-2) {};
	\node [label=right:1](1a) at (12,2) {};	
	\draw (5a) -- (3a) ;
		\draw (5a) -- (11a) ;
	\draw (3a) -- (10a);
	\draw (9a) -- (4a) ;
	\draw (6a) -- (8a) ;
	\draw (3a) -- (7a) ;
	\draw (7a) -- (4a) ;
	\draw (6a) -- (4a) ;
	\draw (2a) -- (3a) ;
		\draw (2a) -- (4a) ;
	\draw (2a) -- (8a) ;	
	\draw (2a)  to (11a) ;	
	\draw (1a)[bend right=30] to (7a) ;	
	\draw (1a)[bend right=45]to (5a);
	\draw (1a)[bend right=30] to (10a);
	\draw (1a) to (9a);
	\draw (1a)  to (6a);
	\draw (1a)[bend left=90]  to (2a);

	\end{tikzpicture}
	\]
	\caption{Two bipartite graphs with the same  degree sequence}
	\label{contrabipartitos}
\end{figure}  

There are many interesting families left for which  degree sequences have been studied,
and for which a transition theorem may exist. One such case are unicyclic graphs, for which we conjecture
there is a transition theorem.

\begin{conjecture}
Given two unicyclic graphs $U,U'$ with the same degree sequence there is a sequence of 2-switches
transforming $U$ into $U'$, such that every intermediate graph is also unicyclic.
\end{conjecture}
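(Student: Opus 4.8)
\emph{Proof proposal.} The plan is to follow the template of Section~\ref{sectionFTT}: introduce the u-switch (a nontrivial $2$-switch on a unicyclic graph whose output is again unicyclic), characterise u-switches by a case analysis along the unique cycle, and then run an induction on the order $n$ driven by trimmable leaves, using a single u-switch to manufacture a trimmable leaf whenever none is present. Throughout, ``unicyclic'' means connected with exactly one cycle; as a preliminary remark, if $U,U'\in\mathcal{G}(s)$ are both unicyclic then $\|U\|=\|U'\|=|U|$, they share their vertex set and (since $d_i=1$ detects leaves) their set of leaves, so $\Lambda(U,U')$ is defined exactly as in Section~\ref{sectionFTT}; note however that $U$ and $U'$ may well have cycles of different lengths, so the u-switches genuinely have to be able to change the cycle length.

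The reduction step transfers almost verbatim from Lemma~\ref{tauiL}. A set $\Lambda$ of leaves meets no cycle and its deletion cannot disconnect a connected graph, so $U-\Lambda$ and $U'-\Lambda$ are again unicyclic with a common degree sequence; the $2$-switches supplied by the inductive hypothesis act on $U-\Lambda$ (hence never involve a vertex of $\Lambda$), and deleting leaves preserves every cycle, so a u-switch sequence between $U-\Lambda$ and $U'-\Lambda$ is automatically a u-switch sequence between $U$ and $U'$. Thus, by induction on $n$ (the small cases, in particular $|U|=3$, being immediate), it suffices to treat $\Lambda(U,U')=\varnothing$.

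Before that I would establish a u-switch characterisation in the spirit of Theorems~\ref{tsiff} and~\ref{teofswitch}. With $\tau={{a \ b}\choose{c \ d}}$ and $C$ the unique cycle of $U$, the analysis splits according to how many of the edges $ab,cd$ lie on $C$, and in each case one tracks which component of the forest $U-ab-cd$ (and, when an edge of $C$ is removed, which piece) contains $C$; the output is a set of conditions on the mutual positions of $a,b,c,d$ along $C$ and along the pendant paths ensuring that $\tau(U)$ is connected with exactly one cycle. The delicate case is when both $ab$ and $cd$ lie on $C$: then $U-ab-cd$ has two components, and $\tau(U)$ is unicyclic precisely when at least one of the new edges $ac,bd$ joins those two components while the other closes a single cycle, so one must read this off from which of $a,b,c,d$ fall on which arc of $C$.

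Finally, for $\Lambda(U,U')=\varnothing$ I would manufacture a trimmable leaf with one u-switch, paralleling Lemma~\ref{ponerhojascompartidas}. Since $U$ and $U'$ share their leaf set, either $U=U'$ is a cycle $C_n$ with no leaves at all, or there is a leaf $\ell$ with neighbour $v$ in $U$ and neighbour $u\neq v$ in $U'$ (and necessarily $\deg u\geq 2$, by connectedness); choosing a neighbour $w$ of $u$ in $U$ with $vw\notin E(U)$, the $2$-switch ${{\ell \ v}\choose{u \ w}}$ makes $\ell$ trimmable, and one checks via the characterisation that it is a u-switch --- the key being that $v\ell$ is a pendant edge, so its removal merely isolates $\ell$ and keeps the cycle under control wherever $u$ and $w$ sit. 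The pure-cycle case $U=U'=C_n$ is out of reach of the leaf reduction and must be handled on its own: here I would show that any two labelled $n$-cycles are joined by $2$-switches passing only through $n$-cycles, which is exactly the classical statement that $2$-opt segment-reversal moves connect all Hamiltonian cycles, provable by an easy induction that sorts the cyclic vertex order one position at a time. I expect the main obstacles to be (i) this pure-cycle case; (ii) the possibility, in the leaf-creation step, that \emph{every} neighbour $w$ of $u$ has $vw\in E(U)$ --- which can only occur when $\deg u=2$ and $U$'s cycle is a short cycle through $u$, and must be cleared by picking a different leaf $\ell$ or by a two-step move; and (iii) the ``both edges on $C$'' sub-case of the characterisation, where, unlike in the forest setting, connectivity of the graph must be actively preserved.
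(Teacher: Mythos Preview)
The statement you are addressing is presented in the paper as a \emph{conjecture}, not a theorem: the authors explicitly leave it open and provide no proof. So there is no ``paper's own proof'' to compare against; what you have written is an attempt to \emph{settle} the conjecture, not to reproduce an existing argument.

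As a proof attempt, your plan is natural --- it is the obvious adaptation of the forest machinery of Section~\ref{sectionFTT} --- but it is a sketch, not a proof, and you yourself flag the three places where it is incomplete. Two of these are genuine gaps. First, the analogue of Lemma~\ref{ponerhojascompartidas} is the heart of the matter, and you do not establish it: you note that the candidate switch ${{\ell\ v}\choose{u\ w}}$ may fail to be interchangeable (every neighbour $w$ of $u$ could satisfy $vw\in E(U)$, e.g.\ when $u,v$ lie on a triangle), and you also need to check that when it \emph{is} interchangeable it is actually a u-switch --- removing the pendant edge $\ell v$ does keep the cycle, but then removing $uw$ and adding $\ell u,vw$ can, depending on where $u,w$ sit relative to the cycle, create a second cycle or disconnect the graph. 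A full proof requires either a clean case analysis showing a good $(\ell,u,w)$ always exists, or a fallback two-step construction when it does not. Second, your u-switch characterisation is only outlined; the ``both edges on $C$'' case in particular needs the explicit combinatorial condition written out and verified, since (unlike the forest setting) both connectivity and cycle-count must be preserved simultaneously.

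The pure-cycle base case is in better shape: your appeal to $2$-opt is correct, since a $2$-switch on $C_n$ that returns a single $n$-cycle is exactly a segment reversal, and segment reversals connect all cyclic orders on a fixed vertex set --- but even here you should state precisely which of the two possible $2$-switches on a pair of cycle edges is the u-switch (the other produces two disjoint cycles) and give the short inductive connectivity argument rather than just invoking it. In summary: the strategy is the right one and may well succeed, but as written it is a proof \emph{outline} with acknowledged holes, not a proof.
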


\section{Acknowledgements}

This work was partially supported by the Universidad Nacional de San Luis, grant PROICO 03-0918, and MATH AmSud, grant 18-MATH-01.

%%%%%%%%%%%%%%%%%%%%%%%%%%%%%%%%%%%%%%%%%%%%%%%%%%%%%%%%%%%%%%%%%%%%%%%%%%%%%%%%%%%%%%%%%%%%%%%%%%%%%%%%%%%%%%%%%%%%%%%%%%%%%%%%%%%%%%%%%%%%%%%%%%%%%%%%%%%%%%%%%%%%%%%%%%%%%%%%%%%%%%%%%%%%%%%%%%%

\end{document}